\newcommand{\Z}{\mathbb{Z}}
\newcommand{\N}{\mathbb{N}}
\newcommand{\Q}{\mathbb{Q}}
\newcommand{\Hy}{\mathbb{H}}
\newcommand{\A}{\mathbb{A}}
\newcommand{\calP}{\mathcal{P}}
\newcommand{\ep}{\varepsilon}
\newcommand{\sca}{\mathfrak{s}}
\newcommand{\nm}{\mathfrak{n}}
\newcommand{\vol}{\mathfrak{v}}
\newcommand{\qf}[1]{\langle #1 \rangle}
\newcommand{\ord}{\operatorname{ord}}
\newcommand{\modfour}{\,(\text{mod}\,4\Z_2)}
\newcommand{\modeight}{\,(\text{mod}\,8\Z_2)}
\newcommand{\prepd}{\overset{*}{\rightarrow}}
\newcommand{\repd}{\rightarrow}
\newcommand{\prin}{\overset{*}{\in}}
\newcommand{\nrepd}{\not \rightarrow}
\newcommand{\lan}{\langle}
\newcommand{\ran}{\rangle}
\newcommand{\rk}[1]{\text{rk}\,#1}
\newtheorem{theorem}{Theorem}[section]
\newtheorem{lemma}[theorem]{Lemma}
\newtheorem{corollary}[theorem]{Corollary}
\newtheorem{proposition}[theorem]{Proposition}
\theoremstyle{remark}
\newtheorem{remark}[theorem]{Remark}
\newtheorem{example}[theorem]{Example}
\numberwithin{equation}{section}
\begin{document}

\title[On locally primitively universal quadratic forms]{On locally primitively universal quadratic forms}

\author[A.G. Earnest]{A.G. Earnest}
\address{Department of Mathematics, Southern Illinois University, Carbondale, IL, 62901, U.S.A.}
\email{aearnest@siu.edu}

\author[B.L.K. Gunawardana]{B.L.K. Gunawardana}
\address{Department of Mathematics, Southern Illinois University, Carbondale, IL, 62901, U.S.A.}
\email{laksh.gg@siu.edu}

\subjclass[2010]{Primary 11E12; Secondary 11E08 11E20 11E25}
\keywords{integral quadratic form, universal form, primitive representation}

\maketitle

\begin{abstract}
A positive definite integral quadratic form is said to be almost (primitively) universal if it (primitively) represents all but at most finitely many positive integers. In general, almost primitive universality is a stronger property than almost universality. The two main results of this paper are: 1) every primitively universal form nontrivially represents zero over every ring $\Z_p$ of $p$-adic integers, and 2) every almost universal form in five or more variables is almost primitively universal.
\end{abstract}

\section{Introduction}

\subsection{Background and history}
A positive definite integral quadratic form $f$ is said to be \textit{almost universal} if it represents all sufficiently large positive integers; that is, if the excluded set of positive integers not represented by $f$ is finite. The systematic study of forms with this property was initiated by Ramanujan over a century ago. In a groundbreaking 1917 paper \cite{R}, he determined all diagonal quaternary forms of the special type $ax^2+ay^2+az^2+dt^2$ which have this property. Among them is the form $x^2+y^2+z^2+9t^2$ which represents all positive integers with the single exception of the number $7$. Halmos \cite{H} subsequently determined all diagonal quaternary forms for which the excluded set consists of a single integer. There are 88 such forms $ax^2+by^2+cz^2+dt^2$ with $a\leq b \leq c\leq d$.

More recently, Bochnak and Oh \cite{BO} completed the determination of effective criteria whereby it can be decided whether or not a general (not necessarily diagonal) positive definite quaternary integral quadratic form $f$ is almost universal. It can be seen that a necessary condition for $f$ to be almost universal is that it be everywhere locally universal; that is, that for every prime $p$, the equation $f(x_1,\ldots,x_4)=a$ is solvable over the ring $\Z_p$ of $p$-adic integers for every $a\in \Z_p$. Indeed, if this local condition does not hold for some prime $p$, then $f$ fails to represent an entire arithmetic progression of positive integers. However, for quaternary forms $f$, these local conditions are not sufficient to guarantee that $f$ is almost universal. For example, the form $f=x^2+y^2+5^2z^2+5^2t^2$ is everywhere locally universal but fails to represent the integers of the type $3\cdot 2^{2k}$, where $k$ is any nonnegative integer. Note that this $f$ fails to represent zero nontrivially $2$-adically; that is, it is anisotropic over $\Z_2$. Bochnak and Oh refer to almost universal forms that are anisotropic for some prime $p$ as \textit{exceptional} and prove that this terminology is indeed appropriate, in the sense that there are only finitely many equivalence classes of such forms. The 144 diagonal quaternary forms of this type were essentially determined by Kloosterman \cite{K}.

In light of classical theorems of Tartakowsky \cite{T} and Ross and Pall \cite{RP} (see \cite[Theorem 1.6, page 204]{Ca}), the stronger condition that $f$ \underline{primitively} represents all integers over $\Z_p$ for all primes $p$ \underline{does} imply that $f$ is almost universal; in fact, such a form \underline{primitively} represents all sufficiently large integers. A positive definite integral quadratic form $f$ is said to be \textit{almost primitively universal} if for all sufficiently large positive integers $a$, there exist $x_1,\ldots,x_n \in \Z$ such that $f(x_1,\ldots,x_n)=a$ and $\text{g.c.d.}\,(x_1,\ldots,x_n)=1$. While it is clear from the definitions that almost primitive universality implies almost universality, the converse is not true. For example, the form $f=x^2+y^2+z^2+9t^2$ mentioned in the first paragraph does not primitively represent the number $8$ over $\Z_2$, and consequently does not primitively represent any member of the arithmetic progression $8+64k$ over $\Z$. So this form $f$ is almost universal, but not almost primitively universal.

As noted above, the identification of almost primitively universal forms reduces completely to a local problem; that is, a positive definite integral quadratic form is almost primitively universal if and only if it is everywhere locally primitively universal. In the paper \cite{nB}, Budarina began a detailed investigation of the local problem of determining when a form over $\Z_p$ is primitively universal, and used these results to derive some criteria for a form of odd discriminant to be almost primitively universal. For example, in the spirit of the celebrated fifteen theorem of Conway and Schneeberger (see \cite{Co}, \cite{mB}), Budarina proved: {\em a classically integral quadratic form in at least four variables with odd squarefree discriminant is almost primitively universal if and only if it primitively represents the integers $1$, $4$ and $8$}. In the present paper, we will extend this line of inquiry, revisiting and generalizing the local investigations of Budarina. In the process, we produce new, more elementary proofs of the main results in the paper \cite{nB}, and extend the results there to the case of forms of even discriminant. In contrast to Budarina's arguments, which draw heavily on Zhuravlev's extensive general work on minimal indecomposable representations, as described in \cite{Z} and the references therein, our methods make use of nothing more advanced than standard local theory as presented, for example, in the foundational books of O'Meara \cite{OM} and Gerstein \cite{G}.

The results in this paper will form the basis for many of the arguments in a forthcoming paper in which the authors will give explicit criteria for local primitive universality of an integral quadratic form of any rank. The results obtained there will be further applied in that paper to complete the work initiated by Budarina \cite{nB} of determining which among the universal positive definite classically integral quaternary quadratic forms are almost primitively universal.

\subsection{Statement of main results} We will state here the main results of this paper in the traditional language of quadratic forms, although the proofs will subsequently by presented from the more modern geometric perspective of quadratic lattices. The first two theorems hold for arbitrary integral quadratic forms $f = \sum_{1\leq i \leq j \leq n} a_{ij}X_iX_j$ with $a_{ij} \in \Z$, with no additional assumption on the cross-term coefficients $a_{ij}$, $i\neq j$.

\begin{theorem}\label{Theorem 1}
If a positive definite integral quadratic form is almost primitively universal, then it non-trivially represents zero over $\Z_p$ for all primes $p$.
\end{theorem}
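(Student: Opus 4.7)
My plan is to prove the contrapositive: if $f$ is anisotropic over $\Z_p$ for some prime $p$, then $f$ is not almost primitively universal. The strategy is to extract from anisotropy a uniform upper bound $K = K(f,p)$ on $\ord_p(f(x))$ for primitive $x \in \Z_p^n$, and then to transfer this local obstruction to $\Z$ to exhibit infinitely many positive integers that $f$ cannot primitively represent.

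For the local step, I would establish the bound by a compactness argument. Assume for contradiction that for each $k \in \N$ there exists a primitive vector $x_k \in \Z_p^n$ (i.e.\ $x_k \notin p\Z_p^n$) with $\ord_p(f(x_k)) \geq k$. Since $\Z_p^n$ is compact, a subsequence $x_{k_j}$ converges to some $x_\infty \in \Z_p^n$. The subgroup $p\Z_p^n$ is clopen in $\Z_p^n$, so its complement is closed, and hence $x_\infty$ remains primitive; in particular, $x_\infty \neq 0$. By continuity of $f$, one obtains $f(x_\infty) = \lim_j f(x_{k_j}) = 0$, contradicting the assumed anisotropy of $f$ over $\Q_p$.

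For the global step, if $f$ primitively represents a positive integer $N$ over $\Z$, then $f(x) = N$ for some $x \in \Z^n$ with $\gcd(x_1, \ldots, x_n) = 1$; at least one coordinate is then a $p$-adic unit, so $x$ is primitive in $\Z_p^n$, and the local bound forces $\ord_p(N) \leq K$. Consequently, none of the infinitely many positive integers $p^{K+1}, p^{K+2}, \ldots$ can be primitively represented by $f$, so $f$ is not almost primitively universal. The argument is short and conceptual; the only point of care is the compactness/continuity step in the local claim, together with the observation that primitivity descends correctly between $\Z$ and $\Z_p$, so I do not anticipate a serious obstacle.
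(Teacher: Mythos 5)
Your proof is correct, and your local step takes a genuinely different route from the paper's. The paper establishes the same local fact (Proposition~\ref{N a.p.u. cond}: for an anisotropic integral $\Z_p$-lattice $L$ there is an $l=l(L,p)$ with $q^*(L)\cap p^l\Z_p=\emptyset$, hence Corollary~\ref{anisotropic}) by an explicit structural analysis: it reduces to rank at most $4$, fixes a Jordan splitting $L\cong L_{(0)}\perp\cdots\perp L_{(t)}$, and shows case by case (diagonal components via the Local Square Theorem, then the improper $2$-adic components via order computations and Lemma~\ref{a.p.u. lem}) that a primitive vector with $q$-value in $p^{t+3}\Z_p$ would force an isotropic vector. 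Your argument replaces all of this with a soft compactness observation: the set of primitive vectors is closed in the compact set $\Z_p^n$, $q$ is continuous, and a sequence of primitive vectors with $\ord_p q\to\infty$ would converge (along a subsequence) to a nonzero isotropic vector. This is shorter and entirely elementary, at the cost of being non-effective; the paper's computation buys the explicit exponent $t+3$, which is of independent interest and feeds into the authors' explicit local criteria, but is not needed for Theorem~\ref{Theorem 1} itself. The global transfer is the same in both treatments: a primitive representation over $\Z$ localizes to a primitive representation over $\Z_p$, so the infinitely many integers $p^{K+1},p^{K+2},\ldots$ witness the failure of almost primitive universality.
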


\noindent Thus, no almost universal forms of the exceptional type of Bochnak and Oh can be almost primitively universal. The next result shows that the distinction between almost universality and almost primitive universality is no longer present when the number of variables exceeds four.

\begin{theorem}\label{Theorem 2}
If a positive definite integral quadratic form in five or more variables is almost universal, then it is almost primitively universal.
\end{theorem}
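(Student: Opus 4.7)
The strategy is to reduce the global almost primitive universality statement to a family of purely local questions. Almost universality of $f$ immediately implies that over $\Z_p$ the form $f$ represents every element of $\Z_p$: for any $a \in \Z_p$, a sufficiently large positive integer congruent to $a$ modulo a high power of $p$ is represented globally by $f$, and a $p$-adic approximation argument (essentially Hensel's lemma) then lifts this to an exact representation of $a$ over $\Z_p$. In the opposite direction, the Tartakowsky/Ross--Pall theorem cited in the introduction tells us that any positive definite integral form which is locally primitively universal at every prime is already almost primitively universal. Thus Theorem~\ref{Theorem 2} reduces to the following local claim: \emph{a $\Z_p$-lattice of rank at least five that represents every element of $\Z_p$ also primitively represents every element of $\Z_p$.}

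To prove this local claim I would work with the Jordan decomposition of the $\Z_p$-lattice $L$ associated to $f$, writing $L = L_0 \perp L_1 \perp \cdots$ with $L_i$ of scale $p^{s_i}$ and $s_0 < s_1 < \cdots$. Universality forces $s_0 = 0$, so $L_0$ is a nonzero unimodular Jordan component. Because $\rank L \geq 5$, Meyer's theorem guarantees that $L$ is isotropic over $\Q_p$, so $L$ contains a primitive isotropic vector; this is the extra ingredient which is not available in the rank-four exceptional examples of Bochnak--Oh. The proof then proceeds by a case analysis on $r := \rank L_0$ (and, when $p = 2$, on its type). In the large-rank case (roughly $r \geq 4$), one would show directly that $L_0$ is itself primitively universal as a unimodular lattice; primitive universality of $L$ follows because $L_0$ is an orthogonal direct summand, so a primitive vector of $L_0$ remains primitive in $L$. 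In the small-rank case, one starts from an arbitrary representation $a = Q(v_0)$, writes $v_0 = p^k u$ with $u$ primitive in $L$, and perturbs $v_0$ by a carefully chosen vector $w$ drawn from a higher Jordan component so that $Q(v_0 + w) = a$, equivalently $2B(v_0, w) + Q(w) = 0$, while $v_0 + w$ acquires a unit coordinate in some $L_i$. The isotropy of $L$ over $\Q_p$ is precisely what supplies the solvability room for this perturbation.

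I expect the main obstacle to be the case $p = 2$, where the Jordan decomposition admits binary modular components of hyperbolic and anisotropic-plane type that cannot be diagonalized, and where primitivity must be tracked modulo $4$ and $8$ rather than merely modulo $p$. For this case I anticipate enumerating the possible shapes of the first one or two Jordan components consistent with $\rank L \geq 5$ and with universality, and in each subcase exhibiting an explicit primitively universal sublattice of $L$ of small rank. These subcases should nevertheless remain elementary, using only the standard $2$-adic local theory as presented in O'Meara and Gerstein, in keeping with the stated methodological aims of the paper.
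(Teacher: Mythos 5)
Your global-to-local reduction is exactly the paper's: almost universality forces each completion $L_p$ to be $\Z_p$-universal (by compactness of $\Z_p^n$; this is approximation, not really Hensel), and the Tartakowsky/Ross--Pall theorem reduces almost primitive universality of a form of rank $\geq 4$ to primitive $\Z_p$-universality at every prime. So, as in the paper, everything rests on the local claim that a $\Z_p$-universal lattice of rank at least $5$ is primitively $\Z_p$-universal. The difficulty is that your sketch of that local claim --- which is where all the content of the theorem lives --- contains one false step and one unsubstantiated one. The false step is the large-rank case at $p=2$: a proper unimodular Jordan component of rank $4$ is always $\Z_2$-universal but need \emph{not} be primitively $\Z_2$-universal. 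For $L_{(0)}\cong\qf{1,1,1,1}$, any primitive vector whose value lies in $4\Z_2$ must have all four coordinates in $\Z_2^{\times}$, so its value is $\equiv 4\modeight$; hence $8$ is represented but not primitively represented by $L_{(0)}$. Thus for $L\cong\qf{1,1,1,1}\perp\qf{2}$ your plan ``show $L_{(0)}$ is itself primitively universal and transport primitivity to $L$'' fails at the first step. The correct mechanism, used throughout the paper (Lemma~\ref{main argument}), is weaker and more robust: if $L\cong M\perp K$ with $M$ merely $\Z_p$-\emph{universal} and $K\neq 0$, then for any $\alpha$ and any $v\prin K$ the element $\alpha-q(v)$ is represented by $M$, so $\alpha\prepd L$. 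This is also exactly where the hypothesis $n\geq 5$ enters: it guarantees a nonzero orthogonal complement to a universal sublattice of rank at most $4$.

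The unsubstantiated step is the small-rank perturbation argument. Solving $2B(v_0,w)+Q(w)=0$ with $w$ chosen so that $v_0+w$ becomes primitive is precisely the hard part of the problem, and isotropy of $L\otimes\Q_p$ does not supply it: isotropy is a \emph{necessary} condition for primitive universality (that is the content of Corollary~\ref{anisotropic} and Theorem~\ref{Theorem 1}), not a sufficient one, and the paper's proofs of Propositions~\ref{non dyadic} and~\ref{u implies pu p=2} never invoke isotropy of the whole space. What actually closes the small-rank cases is a classification: universality of $L$, fed through Lemma~\ref{rep of units}, forces lower bounds on the norms of the successive Jordan components, and in every resulting configuration $L$ is split by an explicit $\Z_p$-universal sublattice of rank $3$ or $4$ (with a handful of residual dyadic cases involving an improper component $\calP$ scaled from $\Hy$ or $\A$, handled by representing $4\Z_2^{\times}$, $8\Z_2^{\times}$ and $16\Z_2$ primitively by hand). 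You correctly anticipate that $p=2$ with improper binary components is the delicate case, but as written neither the perturbation mechanism nor the appeal to isotropy constitutes a proof of it.
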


\noindent As an application of this theorem and our $2$-adic computations, we prove the following result, which extends Theorem 6 and Corollary 2 of \cite{nB}. For this statement, we restrict to forms that are classically integral, in the sense that the cross-term coefficients are even integers.

\begin{theorem}\label{Theorem 3}
Let $f$ be a positive definite classically integral quadratic form in $n$ variables such that $f$ represents an odd integer and the discriminant of $f$ is not divisible by $p^{n-2}$ for any prime $p$. If $n\geq 5$, or if $n=4$ and the discriminant of $f$ is even, then $f$ is almost primitively universal.
\end{theorem}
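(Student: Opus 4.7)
The plan is to reduce the statement to a local question prime by prime and then invoke, on one side, the global machinery already recorded in the paper and, on the other side, the $p$-adic primitive universality criteria developed in the earlier sections. For $n\ge 5$, Theorem~\ref{Theorem 2} lets us replace ``almost primitively universal'' by ``almost universal'', and the classical theorem of Tartakowsky then reduces the problem to verifying that the $\Z_p$-completion $L_p:=L\otimes_\Z\Z_p$ of the lattice $L$ underlying $f$ is universal over $\Z_p$ at every prime $p$. For $n=4$, the Tartakowsky--Ross--Pall theorem cited in the introduction reduces the claim to showing that $L_p$ is primitively universal at every prime $p$. So in either case one is reduced to a local check.

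Fix an odd prime $p$ and write $L_p=\perp_{i\ge 0}\, p^{i}L^{(i)}$ for the Jordan splitting, with $L^{(i)}$ unimodular of rank $m_i$. Then $\sum_i m_i=n$ and $\ord_p\disc L=\sum_i i\,m_i\le n-3$ by hypothesis, which gives
\[
m_0\;\ge\;3+\sum_{i\ge 2}(i-1)m_i\;\ge\;3.
\]
A unimodular $\Z_p$-lattice of rank at least three (for $p$ odd) is primitively universal over $\Z_p$ by a standard Hensel-lifting argument, and it is also covered by the local primitive universality criteria recorded earlier in the paper; since a primitive representation by the sublattice $L^{(0)}$ is automatically primitive in $L_p$, this handles every odd $p$ in both the $n\ge 5$ and $n=4$ settings.

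At $p=2$ the hypothesis that $f$ represents an odd integer implies that $L_2$ contains a vector of odd norm, so its unimodular Jordan constituent is nonzero and of odd type. When $n\ge 5$, the same bookkeeping as above gives $m_0\ge 3$, so the unimodular odd-type component has rank at least three and falls within the scope of the $2$-adic universality criteria established in the earlier sections. When $n=4$, the hypothesis that $\disc f$ is even, combined with $4\nmid\disc L$, forces $\ord_2\disc L=1$, which pins down $L_2\cong L^{(0)}\perp 2\langle u\rangle$ with $L^{(0)}$ unimodular of rank three and of odd type; the $2$-adic primitive universality lemmas proved earlier in the paper apply to this explicit Jordan shape.

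The main obstacle is the $2$-adic step, particularly for $n=4$, where the Jordan shape is essentially determined but one must verify primitive universality across all admissible sub-types of the rank-three odd unimodular component paired with the binary tail $2\langle u\rangle$. The role of the ``represents an odd integer'' hypothesis is precisely to exclude the remaining $2$-adic obstruction (an even-type unimodular component), while the discriminant bound $p^{n-2}\nmid\disc L$ guarantees that at each prime the unimodular Jordan constituent is large enough to carry every residue class. Once the local conditions are in place, combining them with Theorem~\ref{Theorem 2} (for $n\ge 5$) or with the Tartakowsky--Ross--Pall theorem (for $n=4$) completes the proof.
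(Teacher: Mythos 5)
Your global reduction and your bookkeeping at odd primes are sound and match the paper: the bound $\ord_p dL\le n-3$ forces the leading unimodular Jordan component to have rank at least $3$, and for odd $p$ a unimodular lattice of rank $\ge 3$ is primitively $\Z_p$-universal (Lemma~\ref{unimod}). The genuine gap is at $p=2$, which you yourself flag as ``the main obstacle'' but never resolve. Knowing that the proper unimodular component has rank $m_0\ge 3$ does \emph{not} place $L_2$ ``within the scope of the $2$-adic universality criteria'': by Proposition~\ref{uni rk 3}, a proper unimodular ternary lattice $\qf{\ep_1,\ep_2,\ep_3}$ is $\Z_2$-universal only when $\ep_i\equiv-\ep_j\modfour$ for some $i,j$ (for instance $\qf{1,1,1}$ is not $\Z_2$-universal), so in the case $m_0=3$ no conclusion can be drawn from the unimodular component alone. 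The paper closes this by a \emph{second} application of the discriminant bound: writing $L_2\cong\qf{\ep_1,\ep_2,\ep_3}\perp K$ with $\rk{K}=n-3$, the hypothesis $2^{n-2}\nmid dL$ together with the inequality $2(n-3)\ge n-2$ forces $\sca K=2\Z_2$, whence $\nm K$ is $2\Z_2$ or $4\Z_2$ and $L_2$ contains a quaternary sublattice $\qf{\ep_1,\ep_2,\ep_3,\lambda\ep_4}$ with $\lambda\in\{2,4\}$, which is $\Z_2$-universal by Lemma~\ref{Z2-universal}. None of this appears in your argument.

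The same omission affects your $n=4$ case: you correctly pin down $L_2\cong\qf{\ep_1,\ep_2,\ep_3}\perp\qf{2\ep_4}$, but then merely assert that ``the $2$-adic primitive universality lemmas apply.'' Since here $L_2$ \emph{is} the quaternary lattice --- there is no larger ambient lattice into which a mere $\Z_2$-universal sublattice could be fed via Lemma~\ref{main argument} --- universality is not enough; one must verify \emph{primitive} $\Z_2$-universality directly, which the paper does by observing $\Z_2^{\times}\repd\qf{\ep_2,\ep_3,2\ep_4}$ (Lemma~\ref{reps units}) and invoking Lemma~\ref{split by unit}. As it stands, your proposal is a correct outline of the paper's strategy, but the decisive $2$-adic computations that constitute the actual content of the proof are left undone.
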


\noindent Finally, although the main focus of this paper is on positive definite forms, the local computations that yield the proof of Theorem~\ref{Theorem 2} can be applied to arbitrary integral quadratic forms in the indefinite case as well, to obtain the following result:

\begin{theorem}\label{Theorem 4}
Let $f$ be an indefinite integral quadratic form in five or more variables. If every integer is represented by the genus of $f$, then every nonzero integer is primitively represented by $f$.
\end{theorem}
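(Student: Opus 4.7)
The plan is to combine a local primitive-representation lemma (which is also the engine behind Theorem~\ref{Theorem 2}) with the classical strong approximation theory for indefinite quadratic forms. The first move is to translate the hypothesis: saying that every integer is represented by the genus of $f$ is equivalent to saying that $f$ is universal over $\Z_p$ for every prime $p$. The goal then becomes to show that $f$ primitively represents every nonzero integer globally.

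The main local step is to show that when $n \geq 5$, universality of $f$ over $\Z_p$ upgrades to primitive universality on the nonzero elements of $\Z_p$. This is exactly the local content that the proof of Theorem~\ref{Theorem 2} must establish, since its positive definite conclusion is deduced via the Tartakowsky--Ross--Pall theorem cited in the introduction from the very same everywhere-local primitive universality. The underlying fact is that any quadratic form in five or more variables over $\Q_p$ is isotropic, so the Jordan decomposition of the $\Z_p$-lattice associated with $f$ always contains enough unimodular content that a given representation can be modified by an isotropic vector to turn it into a primitive representation. I expect this lifting argument, especially the case analysis at the prime $2$, to be the main obstacle, and it is the step where the hypothesis $n \geq 5$ is essential.

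With everywhere-local primitive representability of every nonzero integer in hand, I would conclude by invoking the classical strong approximation results of Eichler and Kneser for indefinite lattices: for an indefinite $\Z$-lattice of rank $\geq 3$, each spinor genus consists of a single proper equivalence class, and for rank $\geq 4$ the spinor genera in a fixed genus represent the same nonzero integers, primitively as well as improperly. Applied to $f$, which has rank $\geq 5$, this yields at once that every nonzero integer which is primitively represented locally everywhere is primitively represented by $f$ itself, completing the proof. Because the indefinite hypothesis lets strong approximation do all the global work, there is no need for the finite exceptional set that appears in the positive definite Theorem~\ref{Theorem 2}.
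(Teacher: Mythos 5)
Your proposal is correct and follows essentially the same route as the paper: reduce to showing each $L_p$ is primitively $\Z_p$-universal (the content of Propositions~\ref{non dyadic} and \ref{u implies pu p=2}, i.e.\ the local engine behind Theorem~\ref{Theorem 2}), then pass from local primitive representations to the genus, from the genus to the spinor genus using the rank~$\geq 4$ result, and finally use Eichler's theorem that for an indefinite lattice of rank~$\geq 3$ the spinor genus coincides with the class. The only cosmetic difference is your heuristic description of the local step (modifying a representation by an isotropic vector), which is not quite how the paper's case analysis proceeds, but you correctly defer that step to the same local propositions the paper invokes.
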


\subsection{Organization}
The remainder of this paper will be organized as follows. In Section 2, some basic definitions are reviewed, terminology and notation is summarized, and several preliminary results are established. As noted above, the proofs of the theorems stated in the previous section can be reduced to local arguments. The local considerations required for the proof of Theorem~\ref{Theorem 1} are given in Section 3, with the main results there being Proposition~\ref{N a.p.u. cond} and its corollary. The proof of Theorem~\ref{Theorem 2} requires detailed computations of the primitive representations of $\Z_p$-lattices. The determination of these representations is separated into two sections; those for odd primes $p$ are given in Section 4, with the main result being Proposition~\ref{non dyadic}, and those for $p=2$ in Section 5, culminating in Proposition~\ref{u implies pu p=2}. Remaining details of the proofs of the theorems are contained in Section 6.

\section{Preliminaries}

\subsection{Notation and terminology for lattices}
For the remainder of this paper we will abandon the language of forms, and instead adopt the geometric language of lattices. Unexplained terminology and notation will follow that presented in the books of O'Meara \cite{OM} and Gerstein \cite{G}. To set the context, let $R$ be an integral domain with field of quotients $F$ of characteristic not 2. By an {\em $R$-lattice} $L$, we will mean a finitely generated $R$-submodule of a nondegenerate quadratic space $V$ over $F$ equipped with a quadratic map $q$ and corresponding symmetric bilinear form $B$ for which $q(v)=B(v,v)$ for all $v \in V$. We will say that the $R$-lattice is {\em integral} if $q(L) \subseteq R$, where $q(L)= \{q(v): v \in L \}$.  A vector $v \in L$ is {\em primitive} in $L$, denoted $v \overset{*}{\in}L$, if $\{\alpha \in F: \alpha v \in L  \}  =R$. An element $a \in F$ is said to be {\em represented} ({\em primitively represented}, resp.), denoted $a \rightarrow L$ ($a \xrightarrow{*} L$, resp.), if there exists $v \in L$ ($v \overset{*}{\in} L$, resp.) such that $q(v)=a$. For a set $S$ of elements of $F$, the notation $S \rightarrow L$ ($S \xrightarrow{*} L$, resp.) will be used to indicate that $a \rightarrow L$ ($a \xrightarrow{*} L$, resp.) for all $a \in S.$ The notation $S \not\repd L$ will mean that there exists at least one element $a\in S$ such that $a\not\repd L$, and analogously for $S\not\prepd L$.

For our purposes, the ring $R$ will always be either the ring $\Z$ of rational integers or a ring $\Z_p$ of $p$-adic integers for some prime $p$. Since these rings are principal ideal domains, all lattices under consideration will be free. If $\mathcal{B} = \{v_1, \ldots , v_n\}$ is a basis for a lattice $L$, the matrix $(B(v_i, v_j))$ is the {\em Gram matrix of $L$ with respect to $\mathcal{B}$.} For a symmetric $n \times n$-matrix $M$, we will write $L \cong M$ to indicate that there exists a basis for $L$ such that $M$ is the Gram matrix of $L$ with respect to that basis. In particular, $L \cong \langle a_1, \ldots , a_n \rangle$ will mean that $L$ has an orthogonal basis for which the Gram matrix is the diagonal matrix with the indicated diagonal entries. When $L$ is a $\Z$-lattice, all Gram matrices of $L$ have the same determinant, which is called the {\em discriminant of $L$} and denoted $dL$. When $L$ is a $\Z_p$-lattice, the determinants of all Gram matrices of $L$ lie in the same coset of $\dot\Q_p/(\Z_p^{\times})^2$, where $\Z_p^{\times}$ denotes the group of units of $\Z_p$. This coset is the {\em discriminant of $L$}, again denoted by $dL$. The notation $dL$ is also used to denote the determinant of a specific Gram matrix of $L$, with the exact meaning generally clear from the context.

\subsection{The $p$-adic integers}
In this subsection we will review a few facts regarding the $p$-adic integers that will be used frequently in the remainder of the paper. Further discussion of the topics here and in the next subsection can be found, for example, in the books of O'Meara \cite{OM} or Gerstein \cite{G}. For a prime $p$, $\Q_p$ will denote the field of $p$-adic numbers (that is, the completion of the rational number field $\Q$ with respect to the $p$-adic metric $|\cdot|_p$) and $\Z_p$ will denote the ring of integers of $\Q_p$ (so $\Z_p = \{ \alpha \in \Q_p : |\alpha|_p \leq 1\}$). This ring is a local ring with unique maximal ideal $p\Z_p$, and all fractional ideals of $\Z_p$ in $\Q_p$ are of the form $(p\Z_p)^j$ for some $j \in \Z$; hence the nonzero fractional ideals of $\Z_p$ in $\Q_p$ are linearly ordered by inclusion.

The group of units of $\Z_p$ will be denoted by $\Z^{\times}_p$; thus, $\Z^{\times}_p = \{\alpha \in \Z_p : |\alpha|_p = 1 \} $. So a typical element $a\in \Z_p$ can be written uniquely as $a=p^{\ord_pa}a_0$ with $\ord_pa \in \N \cup\{0\}$ and $a_0\in \Z_p^{\times}$. We will make frequent use of the Local Square Theorem, which in the present context asserts that for $\alpha, \beta \in \Z^{\times}_p$, if $\alpha \equiv \beta (\textup{mod} \ 4p\Z_p)$ then $\alpha \in \beta(\Z^{\times}_p)^2$ (e.g., see \cite[Theorem 3.39]{G}). From this it follows that when $p$ is odd, the group $\Z^{\times}_p $ consists of two squareclasses; that is, $\Z^{\times}_p = (\Z^{\times}_p)^2 \cup \Delta(\Z^{\times}_p)^2$, where $\Delta$ denotes any fixed nonsquare in $\Z^{\times}_p$. For $p=2$, the group $\Z^{\times}_2 $ consists of four squareclasses with representatives 1, 3, 5 and 7. If $\alpha, \beta \in \Z^{\times}_2$, then $\alpha \in (\Z^{\times}_2)^2$ if and only if $\alpha \equiv 1 (\textup{mod} \ 8\Z_2)$, and  $\alpha(\Z^{\times}_2)^2 = \beta(\Z^{\times}_2)^2 $ if and only if $\alpha \equiv \beta (\textup{mod} \ 8\Z_2)$.

\subsection{Orthogonal splittings and invariants of lattices over $\Z_p$ }
In the case of lattices over $\Z_p$, there are several invariants that will frequently be used in the arguments that follow. For such a lattice $L$, we let $\mathfrak{s}L$, $\mathfrak{n}L$ and $\mathfrak{v}L$ denote the scale, norm and volume ideals, respectively, associated to the lattice $L$, as defined in \cite[\S 82E]{OM}. When a lattice can be decomposed as an orthogonal sum of sublattices whose norm ideals are distinct, it is sometimes possible to transfer information on representations between the entire lattice and the sublattices. For example, we will make frequent use of the following results:

\begin{lemma}\label{rep of units}
Let $L$ be an integral $\Z_p$-lattice such that $L \cong M \perp K$ for nonzero sublattices $M$ and $K$ of $L$.

i) If $M \cong \langle \varepsilon \rangle$, for some $\varepsilon \in \Z^{\times}_p $, and $\nm K \subseteq2p\Z_p$, then $\Z^{\times}_p \not \rightarrow L$.

ii) If $ \Z^{\times}_p  \rightarrow L$ and $\nm K \subseteq 4p\Z_p$, then $\Z^{\times}_p  \rightarrow M$.\\

iii) If $ p\Z^{\times}_p  \rightarrow L$ and $\nm K \subseteq 4p^2 \Z_p$, then $p\Z^{\times}_p  \rightarrow M$.
\end{lemma}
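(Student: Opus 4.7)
All three parts rest on the same basic idea: the orthogonal decomposition $L = M \perp K$ means every value in $q(L)$ can be written as $q(m) + q(k)$ for some $m \in M$ and $k \in K$, and the hypothesis on $\nm K$ controls the $p$-adic size of $q(k)$. The plan is to convert this into a congruence between $q(m)$ and the target scalar, and then feed that congruence into the Local Square Theorem, which asserts that units of $\Z_p$ congruent modulo $4p\Z_p$ lie in the same squareclass of $\Z_p^{\times}$.

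For part (i), I would take any $\alpha \in \Z_p^{\times}$ supposedly represented by $L$ and write $\alpha = \varepsilon x^2 + q(w)$ with $x \in \Z_p$ and $w \in K$. Since $q(w) \in \nm K \subseteq 2p\Z_p$ lies in the maximal ideal $p\Z_p$, the summand $\varepsilon x^2$ must itself be a unit, forcing $x \in \Z_p^{\times}$. For odd $p$ this yields $\alpha\varepsilon^{-1} \equiv x^2 \pmod{p\Z_p}$, so the squareclass of any nonsquare unit $\Delta$ produces an element $\Delta\varepsilon$ that cannot be represented. For $p = 2$, any odd $x$ satisfies $x^2 \equiv 1 \pmod{8\Z_2}$, so $\alpha \equiv \varepsilon \pmod{4\Z_2}$, restricting $\alpha$ to only two of the four unit squareclasses of $\Z_2^{\times}$; the missing squareclasses give units not represented by $L$.

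For (ii) and (iii), the template is uniform: write $\alpha = q(m) + q(k)$, use the hypothesis on $\nm K$ to show that $q(m)$ lies in the same squareclass of $\Z_p^{\times}$ (resp.\ of $p\Z_p^{\times}$) as $\alpha$, and then replace $m$ by $u^{-1}m \in M$ for an appropriate unit $u$ so that $q(u^{-1}m) = \alpha$ exactly. In (ii), $q(k) \in 4p\Z_p$ gives $q(m) \equiv \alpha \pmod{4p\Z_p}$ with both sides units, and the Local Square Theorem produces $u \in \Z_p^{\times}$ with $q(m) = \alpha u^2$. In (iii), $\alpha \in p\Z_p^{\times}$ together with $q(k) \in 4p^2\Z_p$ forces $q(m) \in p\Z_p^{\times}$; after pulling a common factor of $p$ out of both $\alpha$ and $q(m)$, the remaining unit factors differ by an element of $4p\Z_p$, so the Local Square Theorem applies to them.

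There is no substantive obstacle, only careful bookkeeping of $p$-adic valuations: the constants $2p$, $4p$ and $4p^2$ in the three hypotheses on $\nm K$ are precisely the thresholds needed so that, after subtracting $q(k)$ (and, in (iii), factoring out $p$), the remaining congruence becomes fine enough to invoke the Local Square Theorem. The only point that deserves care is the $p = 2$ case of part (i), where one must verify by hand that the ideal $4\Z_2$ separates enough unit squareclasses to leave at least one squareclass unrepresented.
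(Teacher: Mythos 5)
Your proposal is correct and follows essentially the same route as the paper: each part reduces the representation $\alpha = q(m)+q(k)$ to a congruence modulo the ideal containing $\nm K$ and then invokes the Local Square Theorem (with the explicit count of unit squareclasses modulo $4\Z_2$ handling $p=2$ in part (i)). Your write-up is in fact somewhat more detailed than the paper's, which dispatches (i) in two sentences and (iii) with ``similar argument,'' but the underlying mechanism is identical.
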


\begin{proof}
$i)$ When $p$ is odd, it follows from the Local Square Theorem that the only units that are represented by $L$ are in $\varepsilon(\Z^{\times}_p)^2$. For $p=2$, any unit represented by $L$ is congruent to $\varepsilon$ modulo $4\Z_2$. Hence at most two squareclasses of units can be represented by $L$.

$ii)$ Let $\mu $ be a unit represented by $M \perp K.$ Then $\mu = q(x) + q(y)$ for some $x $ in $ M $ and $y $ in $ K.$ Since $\nm K \subseteq 4p\Z_p$, we have $\mu $ congruent to $q(x) $ modulo $ 4p\Z_p$. So there exists $\lambda \in\Z_p^{\times}$ such that $\mu = \lambda^2q(x)=q(\lambda x)\in q(M)$.

$iii)$ Similar argument as $ii)$.
\end{proof}

The norm and scale of a lattice are related by the following containments:
$$ 2\sca L \subseteq\nm L \subseteq \sca L.$$
In particular, when $p$ is odd it is always the case that $\nm L = \sca L$, and when $p=2$ there are two possibilities, namely $\nm L = \sca L$ or $\nm L = 2\sca L$. The scale and volume of a lattice are related by the containment $\vol L \subseteq (\sca L)^n$, where $n$ is the rank of $L$. When equality holds, the lattice is said to be $\sca L$-{\em modular}. A $\Z_p$-modular lattice is called {\em unimodular}, and a lattice is referred to simply as {\em modular} if it is $A$-modular for some fractional ideal $A$. Thus, a $\Z_p$-lattice $L$ is $p^a\Z_p$-modular if and only if the scaled lattice $L^{a^{-1}}$ is unimodular (here the notation $L^{\alpha}$ denotes the scaled lattice, as defined in \cite[\S 82J]{OM}). For an odd prime $p$, a modular lattice over $\Z_p$ can always be written as an orthogonal sum of rank 1 sublattices (that is, such a lattice is diagonalizable); a modular lattice over $\Z_2$ can be written as an orthogonal sum of modular sublattices of rank 1 or 2 \cite[93:15]{OM}. The modular lattices $L$ over $\Z_2$ which are diagonalizable are precisely those for which $\nm L = \sca L$; these are referred to as {\em proper}. In the case of an improper unimodular lattice $L$ over $\Z_2$, one can more precisely say that $L$ has an orthogonal splitting
\begin{equation}\label{improper unimodular}
L \cong \mathbb{H} \perp \ldots \perp \mathbb{H} \perp P,
\end{equation}
where $\mathbb{H}$ denotes a hyperbolic plane with matrix  $ \left(\begin{smallmatrix} 0 & 1\\ 1 & 0 \end{smallmatrix}\right)$ and $P$ is a binary lattice isometric to either $\mathbb{H}$ or the lattice $\mathbb{A}$ with matrix   $ \left(\begin{smallmatrix} 2 & 1\\ 1 & 2 \end{smallmatrix}\right)$  (e.g., see \cite[Corollary 8.10]{G}).

An arbitrary lattice over $\Z_p$ can always be decomposed as an orthogonal sum of modular sublattices. By grouping the sublattices having the same scale, one obtains the so-called {\em Jordan splitting} for the lattice. For our purposes, we will be considering only $\Z_p$-lattices $L$ for which $\nm L = \Z_p$. Thus $\sca L$ will be either $\Z_p$ or $ \frac{1}{2} \Z_p$ by the fundamental containment noted above. The Jordan splitting of such a lattice can thus be written as
\begin{equation}\label{localsplitting}
     L \cong L_{(-1)} \perp  L_{(0)} \perp L_{(1)} \perp \ldots \perp L_{(t)},
\end{equation}
where each Jordan component $L_{(i)}$ is either $p^i\Z_p$-modular or 0.\footnote{Note that our convention for the indexing of the components differs from that of \cite{OM}.} Here $L_{(-1)}=0$ unless $p=2$. The existence of Jordan splittings and the extent to which such splittings are unique are discussed in detail in \cite[\S 91C]{OM}. Of relevance for our purposes is the fact that the ranks, norms and scales of the Jordan components are invariants of the lattice. The rank of the component $L_{(i)}$ will be denoted by $r_i$. When $L$ is a $\Z_2$-lattice with $\nm L=\Z_2$, the leading Jordan component of $L$ will be either an improper $\frac{1}{2}\Z_2$-modular lattice $L_{(-1)}$ if $\sca L = \frac{1}{2}\Z_2$, or a proper unimodular lattice $L_{(0)}$ if $\sca L=\Z_2$. In order to describe the improper $\frac{1}{2}\Z_2$-modular lattices, it will be convenient to introduce the notations $\widehat{\Hy}$ and $\widehat{\A}$ to represent the lattices obtained from $\Hy$ and $\A$, respectively, by scaling by $\frac{1}{2}$. So \[\widehat{\Hy} \cong \left(\begin{smallmatrix} 0 & \frac{1}{2}\\ \frac{1}{2} & 0 \end{smallmatrix}\right) \text{ \,\,  and \,\,  } \widehat{\A} \cong \left(\begin{smallmatrix} 1 & \frac{1}{2}\\ \frac{1}{2} & 1 \end{smallmatrix}\right).\]

\section{Universality and isotropy of $\Z_p$-lattices}
In this section we will establish some results regarding the set $q^*(L)= \{q(v): v \overset{*}{\in} L \}$ when $L$ is an integral $\Z_p$-lattice. The nature of this set depends heavily on whether the lattice $L$ is isotropic (that is, there exists a $0 \neq v \in L$ such that $q(v) =0$ or, equivalently, $0 \in q^*(L)$) or anisotropic, as the following examples of binary modular lattices illustrate.

\begin{example}\label{3.1}
For any prime $p$, $q^*(\widehat{\Hy})=\Z_p$ (e.g., see \cite[Proposition 3.2]{EKM}).
\end{example}

\begin{example}\label{3.2}
For $p=2$, $q^*(\widehat{\A})=\Z^{\times}_p$. To see this, let $\widehat{\A}$ have the Gram matrix given above with respect to the basis $\{v_1, v_2 \}$; so $q(a_1v_1+a_2v_2)=a^2_1 +a_1a_2+a^2_2$. If one or both of the $a_i$ are in $\Z^{\times}_2$, then $a^2_1 +a_1a_2+a^2_2$ is also in  $\Z^{\times}_2$, thus giving the containment $ q^*(\widehat{\A}) \subseteq \Z^{\times}_2$. To see the reverse containment, it is only necessary to check that the expression $a^2_1 +a_1a_2+a^2_2 $ takes on values from the four squareclasses in $\Z^{\times}_2$. We further note that $q(\widehat{A})=\{\alpha \in \Z_p: \ord_p\alpha \text{ is even}\}$.
\end{example}

\begin{example}\label{3.3}
For any prime $p$, let $L \cong \qf{ \ep_1, \ep_2}$ be anisotropic, where $ \varepsilon_1, \varepsilon_2 \in \Z^{\times}_p $. Then $q^*(L) \cap 4p\Z_p = \emptyset$. To see this, let $\{v_1, v_2 \} $ be the basis for which the Gram matrix is $\qf{\ep_1, \ep_2}$. Suppose that $v=a_1v_1 +a_2v_2 \overset{*}{\in} L$. Without loss of generality, suppose that $ a_1 \in \Z^{\times}_p$. If $q(v) \in 4p\Z_p$, then also $ a_2 \in \Z^{\times}_p$. Then $a^2_1\varepsilon_1 + a^2_2\varepsilon_2 \equiv 0 \ (
\textup{mod} \ 4p\Z_p)$, and it follows that $-\varepsilon_1 \varepsilon_2^{-1}  \equiv (a^{-1}_1a_2)^2   \ (\textup{mod} \ 4p\Z_p) $. By the Local Square Theorem, there exists $\lambda \in \Z^{\times}_p$ such that $-\varepsilon_1 \varepsilon_2^{-1} = \lambda^2$. But then $q(v_1 + \lambda v_2)=0$, contrary to the assumption that $L$ is anisotropic.
\end{example}

An integral $\Z_p$-lattice $L$ will be said to be {\em $\Z_p$-universal} if $q(L) = \Z_p$. Note that $L$ is $\Z_p$-universal if and only if $\Z^{\times}_p \cup p\Z^{\times}_p \subseteq q(L)$. Further, $L$ is said to be {\em primitively $\Z_p$-universal} if for each $0 \neq \alpha \in \Z_p$, there exists $v \overset{*}{\in} L$ such that $q(v)=\alpha$. It is clear that primitive $\Z_p$-universality implies $\Z_p$-universality, but the converse is not true, as can be seen from the following examples.

\begin{example}\label{3.4}
Consider $L \cong \langle 1, 1, 3, 3 \rangle$ over $\Z_3$. It is easily seen that $L$ represents $1, 2, 3$ and $6$, which are representatives of the four squareclasses in $\Z^{\times}_3 \cup 3\Z^{\times}_3$; thus, $L$ is $\Z_3$-universal. On the other hand, write $L = M \perp K$, where $M \cong \lan 1, 1 \ran$ and $K \cong \lan 3,3 \ran$, both of which are anisotropic over $\Z_3$. Let $v \overset{*}{\in} L$. Then $v= x+y$, where $x \overset{*}{\in} M $ or $y \overset{*}{\in} K$. If $x \overset{*}{\in} M $, then $q(x) \in \Z^{\times}_3$ by Example \ref{3.3}, and hence $ q(v) \in 3\Z^{\times}_3$. Otherwise, $x \in 3M$ and $y \overset{*}{\in} K$. In that case, $q(x) \in 9\Z_3$ and $q(y) \in 3\Z^{\times}_3$, again by Example \ref{3.3}, and hence $q(v) \in 3\Z^{\times}_3$. So $q^*(L)= \Z^{\times}_3 \cup 3\Z^{\times}_3$ and $L$ is not primitively $\Z_3$-universal.
\end{example}

\begin{example}
Consider $L \cong \widehat{\A} \perp \A$ over $\Z_2$. Then $L$ is $\Z_2$-universal, but not primitively $\Z_2$-universal and $q^*(L)= \Z^{\times}_2 \cup 2\Z^{\times}_2$. The verifications are as in the previous example.
\end{example}

The following two lemmas will be used frequently in the remainder of this paper.

\begin{lemma}\label{main argument}
Let $L$ be an integral $\Z_p$-lattice such that $L \cong M \perp K$ for nonzero sublattices $M$ and $K$ of $L$. If at least one of $M$ and $K$ is $\Z_p$-universal, then $L$ is primitively $\Z_p$-universal.
\end{lemma}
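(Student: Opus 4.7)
The plan is to observe that a primitive representation of $\alpha$ in $L=M\perp K$ can be assembled by fixing any primitive vector in one component and then using the universality of the other to hit exactly the right value on the remaining component. Since we are in a DVR, a vector $v=x+y\in M\perp K$ is primitive in $L$ precisely when it fails to lie in $pL=pM\perp pK$, which happens as soon as at least one of the components $x,y$ is primitive in its own summand.

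So, without loss of generality assume $M$ is $\Z_p$-universal, i.e.\ $q(M)=\Z_p$. Given a nonzero $\alpha\in\Z_p$, I would first pick any primitive vector $y\overset{*}{\in}K$ (for instance, a basis element of $K$, which exists because $K$ is nonzero and free over $\Z_p$), and set $\beta:=q(y)\in\Z_p$. Then $\alpha-\beta\in\Z_p$, so by the $\Z_p$-universality of $M$ there exists $x\in M$ with $q(x)=\alpha-\beta$. Put $v:=x+y$; the orthogonality of the splitting gives $q(v)=q(x)+q(y)=\alpha$.

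It remains to verify primitivity of $v$ in $L$. Since $y\overset{*}{\in}K$ we have $y\notin pK$. If $v$ were in $pL=pM\perp pK$, then comparing $K$-components in this orthogonal splitting would force $y\in pK$, a contradiction. Hence $v\overset{*}{\in}L$, and this representation works for every nonzero $\alpha$, so $L$ is primitively $\Z_p$-universal.

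There is no real obstacle here: the only thing to be careful about is the characterization of primitivity in an orthogonal sum over a DVR (the "one component primitive suffices" observation), and the fact that $\alpha-\beta$ lies in $\Z_p$ so that $\Z_p$-universality of $M$ applies without case analysis on whether $\alpha$ is a unit or lies in $p\Z_p$. The argument makes no use of isotropy, of the prime $p$ being odd or even, or of any special structure of the Jordan decomposition.
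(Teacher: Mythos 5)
Your proof is correct and follows essentially the same route as the paper's: fix a primitive vector $y$ in the non-universal summand, use $\Z_p$-universality of the other summand to represent $\alpha-q(y)$, and conclude primitivity of the sum from primitivity of $y$. The only difference is that you spell out the "primitive in one orthogonal component implies primitive in $L$" step, which the paper leaves implicit.
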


\begin{proof}
Suppose $M$ is $\Z_p$-universal. For any $\alpha \in \Z_p$ and any $ v \overset{*}{\in}  K$, we have  $\alpha-q(v) \in \Z_p$. So $\alpha-q(v) $ is represented by $M$; hence $\alpha $ is primitively represented by $M \perp K \cong L$.
\end{proof}

\begin{lemma}\label{split by unit}
Let $K$ be an integral $\Z_p$-lattice such that $\Z_p^{\times} \repd K$. Then $\qf{\ep}\perp K$ is primitively $\Z_p$-universal for any $\ep \in \Z_p^{\times}$.
\end{lemma}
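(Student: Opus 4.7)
The plan is to show that every nonzero $\alpha\in\Z_p$ can be written as $\ep a^2+q(w)$ with $a\in\Z_p$, $w\in K$, and the resulting vector $av_0+w$ (where $v_0$ spans the orthogonal summand $\qf{\ep}$) primitive in $L:=\qf{\ep}\perp K$. The main observation is that it suffices to take $a\in\{0,1\}$, choosing $a$ so that the residual $\alpha-\ep a^2$ is a unit, and then to invoke the hypothesis $\Z_p^{\times}\repd K$ to produce $w$.

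More precisely, I would split on whether $\alpha$ is a unit. If $\alpha\in\Z_p^{\times}$, take $a=0$, so that $\alpha-\ep a^2=\alpha\in\Z_p^{\times}$. If instead $\alpha\in p\Z_p$, take $a=1$; then $\alpha-\ep$ is a non-unit minus a unit, hence a unit. In either case the hypothesis yields some $w\in K$ with $q(w)=\alpha-\ep a^2\in\Z_p^{\times}$, and $q(av_0+w)=\ep a^2+q(w)=\alpha$.

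It remains to check that $av_0+w$ is primitive in $L$. The vector $w$ is automatically primitive in $K$: since $K$ is integral, if we had $w\in pK$, say $w=pw'$, then $q(w)=p^2 q(w')\in p^2\Z_p$, contradicting that $q(w)$ is a unit. In the case $a=1$, the coefficient of $v_0$ is a unit, so $av_0+w$ is primitive in $L$ at once. In the case $a=0$, primitivity of $w$ in $K$ (some basis-coordinate a unit) immediately gives primitivity of $av_0+w=w$ in $L=\Z_p v_0\oplus K$.

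I do not anticipate any real obstacle; the argument is a short case-split coupled with the observation that integrality of $K$ forces any vector whose norm is a unit to be primitive. The only slightly subtle point is the definition-chase verifying that primitivity in the summand $K$ transfers to primitivity in $L$, which is essentially automatic from the orthogonal decomposition $L=\Z_p v_0\oplus K$.
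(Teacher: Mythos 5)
Your proof is correct and follows essentially the same route as the paper: for $\alpha\in p\Z_p$ subtract $\ep$ to land in $\Z_p^{\times}\subseteq q(K)$ (getting primitivity for free from the unit coefficient of $v_0$), and for $\alpha\in\Z_p^{\times}$ use the hypothesis directly, with primitivity forced by integrality of $K$. The paper's one-line proof writes out only the first case and leaves the second implicit, so your version is just a more complete rendering of the same argument.
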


\begin{proof} If $\lambda \in p\Z_p$, then $\lambda-\ep \in \Z_p^{\times}$. So $\lambda-\ep \repd K$ and $\lambda\prepd \qf{\ep}\perp K$. 
\end{proof}

We are now ready to state the main result of this section.

\begin{proposition}\label{N a.p.u. cond}
Let $p$ be a prime and $L$ an anisotropic integral $\Z_p$-lattice. Then there exists $l=l(L,p) \in \N $ such that $q^*(L) \cap p^l\Z_p = \emptyset$.
\end{proposition}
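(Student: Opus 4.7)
The plan is to proceed by a $p$-adic compactness argument. As a finitely generated $\Z_p$-module, the lattice $L$ is compact in its natural $p$-adic topology, and the sublattice $pL$, being of finite index $p^{\rank L}$ in $L$, is clopen. Hence the complement $L \setminus pL$, which is precisely the set of primitive vectors of $L$, is closed in $L$ and therefore compact. Since the quadratic map $q \colon L \to \Z_p$ is polynomial in any choice of coordinates on $L$, it is continuous, so the image $q^*(L) = q(L \setminus pL)$ is a compact subset of $\Z_p$.

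The hypothesis of anisotropy now enters exactly once: every primitive vector is nonzero (as $0 \in pL$), and by anisotropy no nonzero vector maps to $0$ under $q$. Thus $q^*(L) \subseteq \Z_p \setminus \{0\}$. I would then exploit the decomposition $\Z_p \setminus \{0\} = \bigsqcup_{k \geq 0} p^k \Z_p^{\times}$, where each piece $p^k \Z_p^{\times} = p^k\Z_p \setminus p^{k+1}\Z_p$ is the difference of two clopen subsets of $\Z_p$ and hence itself clopen. These pieces form an open cover of the compact set $q^*(L)$, so finitely many suffice; taking $l$ strictly greater than the largest index appearing in any finite subcover then yields $q^*(L) \subseteq \bigcup_{k < l} p^k \Z_p^{\times}$, which is equivalent to the desired conclusion $q^*(L) \cap p^l \Z_p = \emptyset$.

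I do not anticipate any serious obstacle: the argument is essentially a packaging of standard topological facts (compactness of $L$, clopenness of $pL$ in $L$, continuity of $q$, clopen decomposition of $\Z_p \setminus \{0\}$), with the anisotropy of $L$ used exactly at the one point where $0$ must be excluded from the compact image $q^*(L)$. For readers preferring a presentation without topological vocabulary, the same content can be phrased as a diagonal extraction: a putative sequence of primitive vectors $v_l \in L$ with $q(v_l) \in p^l \Z_p$ would, by finiteness of the quotients $L/p^kL$, admit a subsequence whose reduction stabilizes in every $L/p^kL$; the resulting limit $v^* \in L$ would itself be primitive (since $v_l \notin pL$ stabilizes in $L/pL$) and would satisfy $q(v^*) = 0$, contradicting the anisotropy of $L$.
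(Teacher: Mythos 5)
Your proof is correct, and it takes a genuinely different route from the paper's. The paper proceeds by explicit case analysis: it disposes of ranks $2$ and $3$ by citation, notes rank $\geq 5$ is vacuous by isotropy, and then works through the rank-$4$ case via Jordan splittings, the Local Square Theorem, and a separate lemma on anisotropic lattices with an improper component over $\Z_2$; the payoff is an explicit admissible value $l = t+3$, where $t$ is the largest index in the Jordan splitting of $L$. Your compactness argument replaces all of this with soft topology: $L \setminus pL$ is the set of primitive vectors and is compact, $q$ is continuous, anisotropy excludes $0$ from the compact image $q^*(L)$, and the clopen partition $\Z_p\setminus\{0\} = \bigsqcup_{k\geq 0} p^k\Z_p^{\times}$ forces $q^*(L)$ to meet only finitely many pieces. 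Every step checks out (in particular, for a free $\Z_p$-lattice, primitivity of $v$ is indeed equivalent to $v \notin pL$, and $pL$ is clopen of finite index). What you lose is effectivity --- no explicit $l$ --- but the paper only ever uses this proposition through its corollary that anisotropic lattices are not primitively $\Z_p$-universal, for which bare existence of $l$ suffices. What you gain is uniformity across all ranks and both the dyadic and non-dyadic cases, with no appeal to the classification of modular lattices over $\Z_2$. Your closing remark that the argument can be rephrased as a diagonal extraction through the finite quotients $L/p^kL$ is also sound and makes the proof accessible without topological language.
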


Before proceeding to the proof of Proposition~\ref{N a.p.u. cond}, we prove the following lemma.

\begin{lemma}\label{a.p.u. lem}
Let $L \cong M \perp K$ be a $\Z_2$-lattice with $M \cong \mathbb{A}^{2^a}$, $K \cong \lan 2^b \beta \ran \perp \lan 2^c \gamma \ran$, where $a, b, c$ are non-negative integers and $\beta, \gamma \in \Z^{\times}_2$. If $\sca L = \Z_2$ and $L$ is anisotropic, then $a, b, c$ are all even and $\beta + \gamma \equiv 4 (\textup{mod}\,8\Z_2)$.
\end{lemma}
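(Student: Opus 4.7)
We pass to the $\Q_2$-quadratic space $V = L \otimes \Q_2$. Since $L$ is anisotropic, so is $V$; and since $V$ has rank $4$ over $\Q_2$, it is the unique anisotropic quaternary $\Q_2$-space, characterized by having trivial discriminant $dV \in (\dot{\Q}_2^{\times})^2$ and Hasse invariant $s_2(V) = +1$ (see \cite{OM}). Completing the square in $x^2 + xy + y^2$ yields the $\Q_2$-isometry $\A \cong \qf{2,6}$, whence
\[
V \;\cong\; \qf{2^{a+1},\; 3\cdot 2^{a+1},\; 2^{b}\beta,\; 2^{c}\gamma}.
\]

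From $\det V = 3\beta\gamma \cdot 2^{2a+b+c}$, the requirement $dV \in (\dot{\Q}_2^{\times})^2$ forces $b+c$ to be even and $3\beta\gamma \equiv 1 \modeight$. A direct enumeration over the four squareclasses of $\Z_2^{\times}$ modulo $8$ shows that for $\beta, \gamma \in \Z_2^{\times}$, the congruence $\beta\gamma \equiv 3 \modeight$ is equivalent to $\beta + \gamma \equiv 4 \modeight$, yielding the claimed condition on $\beta + \gamma$.

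For the parity statement, the hypothesis $\sca L = \Z_2$ together with $\sca M = 2^a\Z_2$ and $\sca K = 2^{\min(b,c)}\Z_2$ forces $\min(a,b,c) = 0$. Applying the multiplicativity $s_2(V_1 \perp V_2) = s_2(V_1)\,s_2(V_2)\,(dV_1,dV_2)_2$ together with the standard Hilbert symbol formulae on $\Z_2^{\times}$, one computes $s_2(V)$ case-by-case. When $b, c$ are both odd (with $b+c$ even), the scale constraint forces $a = 0$, and one finds $s_2(V) = -1$ for each of the four permitted pairs $(\beta, \gamma) \pmod{8}$, contradicting anisotropy. When $b, c$ are both even, one obtains $s_2(V) = (\beta, \gamma)_2$ if $a$ is even and $s_2(V) = -(\beta, \gamma)_2$ if $a$ is odd; since $(\beta, \gamma)_2 = +1$ for every pair permitted by $\beta+\gamma \equiv 4 \modeight$, anisotropy forces $a$ to be even as well.

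The main obstacle lies in the Hilbert-symbol bookkeeping required for the Hasse computation: while each individual case is elementary, one must systematically track the parities of $a, b, c$ and the four squareclasses of $\beta, \gamma$ modulo $8$, and verify at the end that the remaining admissible configurations are precisely those asserted by the lemma.
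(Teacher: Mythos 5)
Your argument is correct, but it takes a genuinely different route from the paper. You work at the level of the quadratic space $V=L\otimes\Q_2$ and invoke the classification of quaternary $\Q_2$-spaces: anisotropy forces $dV$ to be a square (giving $b+c$ even and $3\beta\gamma\equiv 1\modeight$, hence $\beta+\gamma\equiv 4\modeight$ after the small enumeration you describe) and forces the Hasse invariant to take its unique anisotropic value, which after the Hilbert-symbol case analysis pins down the parities of $a,b,c$ once $\min(a,b,c)=0$ is extracted from $\sca L=\Z_2$. I checked the key symbol computations you assert ($s_2(V)=\pm(\beta,\gamma)_2$ according to the parity of $a$ when $b,c$ are even, $s_2(V)=-1$ when $b,c$ are odd, and $(\beta,\gamma)_2=+1$ for the four admissible residue pairs), and they are right. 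The paper instead stays entirely at the lattice level and is more elementary: for the parity claim it directly exhibits an isotropic vector whenever $a$ and $b$ (or $a$ and $c$) have opposite parity, using the fact that $q(\A^{2^a})$ consists exactly of $0$ and the elements of order $\geq a+1$ congruent to $a+1$ mod $2$ (so it contains the negative of any element of $K$ of matching order); for the unit condition it rules out $\beta+\gamma\equiv 0\modeight$ (then $K$ itself is isotropic) and $\beta+\gamma\equiv 2\,(\text{mod}\,4\Z_2)$ (then $K$ represents an element of odd order, whose negative lies in $q(M)$), leaving $4\modeight$ as the only possibility. Your approach buys a systematic derivation from the two standard invariants at the cost of nontrivial Hilbert-symbol bookkeeping (which you only sketch, though the stated outcomes are correct); the paper's approach is self-contained, constructs the isotropic vectors explicitly, and fits its stated aim of avoiding machinery beyond basic local theory.
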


\begin{proof}
If $a, b$ had opposite parity, then, for $k$ such that $2k+b \geq a+1$, we would have $2^{2k+b} \beta \rightarrow K$ and $-2^{2k+b} \beta \rightarrow M$, and it would follow that $L$ is isotropic. So $a,b$ have the same parity, and, similarly, $a, c$ have the same parity. Since $\sca L = \Z_2$, at least one of $a, b, c$ equals $0$, so all of $a, b, c$ must be even. To prove the second assertion, suppose first that $\beta +\gamma \equiv 0(\textup{mod} \ 8\Z_2)$. Then $\beta (\Z^{\times}_2)^2 = -\gamma  (\Z^{\times}_2)^2$ and it would follow that $K$ is isotropic. If $\beta + \gamma \equiv 2(\textup{mod} \ 4\Z_2)$, then $K$ would represent an element of odd order and again $L$ would be isotropic. Since $\beta + \gamma \in 2\Z_2$, this leaves $\beta + \gamma \equiv 4 (\textup{mod} \, 8\Z_2)$ as the only remaining possibility, thus completing the proof. 
\end{proof}

\noindent {\it Proof of Proposition~\ref{N a.p.u. cond}}\, Note that the result for binary lattices is covered by Examples \ref{3.2} and \ref{3.3}, and for ternary lattices the proof is given in \cite[Proposition 3.1]{EKM}. Moreover, the assertion is vacuous when $\rk{L} \geq 5$ since every $\Z_p$-lattice of rank exceeding 4 is isotropic. So we need only consider the case of lattices $L$ of rank 4. Further, by scaling if necessary, we may assume that $\sca L = \Z_p$. So $L$ has a Jordan splitting
$$ L \cong L_{(0)} \perp \ldots \perp L_{(t)},$$
for some non-negative integer $t$, where $L_{(0)} \neq 0$ and $L_{(t)} \neq 0$. Throughout the proof, we let $\{v_1, \ldots , v_4\}$ be a basis for $L$ that gives rise to the indicated Jordan spitting.

Our goal will be to prove that the conclusion of the proposition holds for $l=t+3$, although in some cases a smaller exponent would suffice. So suppose there exists $v \overset{*}{\in} L$ such that $q(v) \in p^{t+3} \Z_p$. Write $v= \sum_{i=1}^{4} b_iv_i$, where $b_1, \ldots , b_4 \in \Z_p$ and $b_k \in \Z^{\times}_p$ for at least one index $k$.

Consider first the case when $v_k$ occurs in the orthogonal basis for a diagonalizable Jordan component of $L$; say $q(v_k)=p^{e_k} \varepsilon_k$, with $\varepsilon_k \in \Z^{\times}_p$. Writing $v= b_k v_k +w$, where $w= \sum_{i\neq k}^{} b_iv_i$, we have
$$b^2_k p^{e_k} \varepsilon_k + q(w)=q(v) \equiv 0 (\textup{mod} \ p^{t+3} \Z_p).$$
It follows that $\ord_pq(w) = e_k$ and
$$ -p^{-e_k} \varepsilon^{-1}_k q(w) \equiv b^2_k (\textup{mod} \ p^{t-e_k +3} \Z_p).$$
Since $t-e_k +3 \geq 3$, it then follows from the Local Square Theorem that there exists $\lambda \in \Z^{\times}_p$ such that
$$-p^{-e_k} \varepsilon^{-1}_k q(w) = \lambda ^2.$$
Then
$$q(\lambda v_k+w)= \lambda ^2 p^{e_k} \varepsilon_k + q(w) = 0,$$
contrary to the assumption that $L$ is anisotropic.

This completes the proof when $p$ is odd, and when $p=2$ and $L$ is diagonalizable. So we need only further consider the case that $p=2$ and $L$ has at least one improper Jordan component. Since $L$ is assumed to be anisotropic, this component must be isometric to $\mathbb{A}^{2^s}$, for some non-negative integer $s$.

Consider first the case that $L \cong \mathbb{A} \perp \mathbb{A}^{2^t}$. If $t$ is even, then $L$ is isotropic; so it suffices to consider odd $t$. Let $v \overset{*}{\in} L$; say $v = x+y$ with $x \in \mathbb{A}$, $y \in \mathbb{A}^{2^t}$. If $x \overset{*}{\in} \mathbb{A}$, then $q(v) \in 2\Z^{\times}_2$. Otherwise, $y \overset{*}{\in} \mathbb{A}^{2^t}$ and $q(y) \in 2^{t+1} Z^{\times}_2$. Since $\ord_2 q(x)$ is odd, $\ord_2 q(x) \neq \ord_2 q(y) = t+1$. So
$$ \ord_2 q(v) = \ord_2 (q(x)+q(y)) = \textup{min} \{\ord_2 q(x), t+1 \} \leq t+1.$$
So we conclude that $q(v) \not \in 2^{t+3} \Z_2$.

In all other cases, the Jordan splitting of $L$ has the form considered in Lemma \ref{a.p.u. lem} for suitable integers $a, b, c$ and units $\beta, \gamma$. By Lemma \ref{a.p.u. lem}, we need only consider the case when $a, b, c$ are all even and $\beta + \gamma \equiv 4(\textup{mod} \ 8\Z_2)$. So $L \cong M \perp K$, with $M \cong \mathbb{A}^{2^a}$, $K \cong \lan 2^b \beta \ran \perp \lan 2^c \gamma \ran $ in the basis $\{u, w\}$. Let $v \overset{*}{\in} L$, and write $v=x+\alpha u + \delta w$, $x \in \mathbb{A}^{2^a}$, $\alpha , \delta \in \Z_2$. Moreover, we may assume that $x \overset{*}{\in} \mathbb{A}^{2^a}$, as the other cases are covered in the first part of the proof. So $q(x) \in 2^{a+1} \Z^{\times}_2$. Write $\alpha = 2^l \alpha_0$, $\delta=2^k \delta_0$, $\alpha_0, \delta_0 \in \Z^{\times}_2$. Then
$$q(\alpha u + \delta w) = 2^{2l+b} \alpha^2_0 \beta + 2^{2k+c} \delta^2_0 \gamma.$$
If $2l+b \neq 2k+c$, then $\ord_2 q(\alpha u + \delta w)= \textup{min} \{2l+b, 2k+c\}$. If $2l+b = 2k+c$, then $\ord_2 q(\alpha u + \delta w)= 2l+b+2$ since $\beta +\gamma \equiv 4 (\textup{mod} \ 8\Z_2)$. In either case, $\ord_2 q(\alpha u + \delta w)$ is even; hence $\ord_2 q(\alpha u + \delta w) \neq \ord_2 q(x) = a+1$, since $a+1$ is odd. So
$$\ord_2 q(v)= \textup{min} \{a+1, \ord_2 q(\alpha u + \delta w)\} \leq t+1.$$
Once again we conclude that $q(v) \not \in 2^{t+3}\Z_2$, and the proof is complete.
\bigskip

\begin{corollary}\label{anisotropic}
If $L$ is an anisotropic integral $\Z_p$-lattice, then $L$ is not primitively $\Z_p$-universal.
\end{corollary}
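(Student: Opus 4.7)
The corollary is an essentially immediate consequence of Proposition~\ref{N a.p.u. cond}, so my plan is simply to invoke it and exhibit a nonzero element of $\Z_p$ that fails to be primitively represented.

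Concretely, suppose $L$ is an anisotropic integral $\Z_p$-lattice. By Proposition~\ref{N a.p.u. cond}, there exists $l = l(L,p) \in \N$ such that $q^*(L) \cap p^l \Z_p = \emptyset$. Now pick any nonzero element $\alpha \in p^l \Z_p$; the most natural choice is $\alpha = p^l$ itself. Then $\alpha \neq 0$ and $\alpha \notin q^*(L)$, so no primitive vector of $L$ has $q$-value equal to $\alpha$. By the definition of primitive $\Z_p$-universality, which demands that every nonzero element of $\Z_p$ be primitively represented, $L$ cannot be primitively $\Z_p$-universal.

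There is no genuine obstacle here, since all the heavy lifting (the case analysis for binary lattices, lattices of rank $3$ handled by \cite{EKM}, and the rank $4$ case with the delicate $p = 2$ improper-Jordan-component argument) is already contained in Proposition~\ref{N a.p.u. cond}. The only thing to check is the trivial observation that $p^l \Z_p$ contains nonzero elements, which is clear.
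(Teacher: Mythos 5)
Your proposal is correct and is exactly the argument the paper intends: the corollary is stated immediately after Proposition~\ref{N a.p.u. cond} with no separate proof, precisely because taking the nonzero element $p^l \in p^l\Z_p \setminus q^*(L)$ directly contradicts primitive $\Z_p$-universality. Nothing further is needed.
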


\smallskip
\section{Primitively universal $\Z_p$-lattices - Non-dyadic case}

Throughout this section, $p$ will denote an odd prime.

\begin{lemma}\label{unimod}
Let $L$ be a unimodular $\Z_p$-lattice.

i) If $\rk{L} = 2$, then $ \Z^{\times}_p \rightarrow L$.

ii) If $\rk{L} \geq 3$, or if $\rk{L} =2$ and $L$ is isotropic, then $L$ is primitively $\Z_p$-universal.
\end{lemma}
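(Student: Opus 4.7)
The plan is to diagonalize $L$ (valid over $\Z_p$ for odd $p$), prove (i) by a pigeonhole-plus-Hensel argument on the reduction mod $p$, and then deduce (ii) by combining (i) with Lemma \ref{split by unit} in the higher-rank case and by reducing to the hyperbolic plane in the isotropic binary case.

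For (i), write $L \cong \qf{\ep_1, \ep_2}$ with $\ep_1, \ep_2 \in \Z_p^{\times}$, and fix $u \in \Z_p^{\times}$. The subsets $\{\bar\ep_1 x^2 : x \in \mathbb{F}_p\}$ and $\{\bar u - \bar\ep_2 y^2 : y \in \mathbb{F}_p\}$ of $\mathbb{F}_p$ each have cardinality $(p+1)/2$, and hence must meet. The resulting pair $(\bar x_0, \bar y_0) \in \mathbb{F}_p^2$, necessarily nonzero since $\bar u \neq 0$, satisfies $\bar\ep_1 \bar x_0^2 + \bar\ep_2 \bar y_0^2 = \bar u$. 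Because $p$ is odd, the partial derivative $2\ep_1 x$ or $2\ep_2 y$ associated to a nonzero coordinate is a unit at this point, so Hensel's lemma lifts the solution to a solution of $\ep_1 x^2 + \ep_2 y^2 = u$ over $\Z_p$, giving $u \repd L$.

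For (ii), first assume $\rank L \geq 3$ and diagonalize $L \cong \qf{\ep_1} \perp K$ with $K$ unimodular of rank at least $2$. Applying (i) to a binary unimodular sublattice of $K$ (take any two diagonal summands) gives $\Z_p^{\times} \repd K$, and Lemma \ref{split by unit} then yields that $L$ is primitively $\Z_p$-universal. For the remaining case, $\rank L = 2$ with $L$ isotropic, the squareclass obstruction to isotropy forces $dL \in -(\Z_p^{\times})^2$, so $L$ is isometric to the hyperbolic plane $\Hy$. Over odd $p$ the change of basis with matrix $\operatorname{diag}(\tfrac{1}{2},1)$, which has unit determinant, exhibits $\Hy \cong \widehat{\Hy}$ as $\Z_p$-lattices, so Example \ref{3.1} gives $q^*(L) = q^*(\widehat{\Hy}) = \Z_p$ and $L$ is primitively $\Z_p$-universal.

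The only nontrivial step is the mod-$p$ pigeonhole-plus-Hensel argument in (i); once that is in place, (ii) is pure assembly from Lemma \ref{split by unit} and Example \ref{3.1}.
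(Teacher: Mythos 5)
Your proof is correct, and it takes a somewhat different route from the paper's. The paper disposes of both parts by appeal to O'Meara: part (i) is quoted directly as \cite[92:1(b)]{OM}, and for part (ii) the paper invokes \cite[92:1]{OM} to write $L \cong \Hy$ or $L \cong \Hy \perp \qf{1,\ldots,1,-d}$ and then applies Example~\ref{3.1} to the hyperbolic summand. You instead prove (i) from scratch --- the counting-plus-Hensel argument you give is in substance the standard proof of 92:1(b), so nothing is lost --- and in (ii), for rank $\geq 3$, you avoid the hyperbolic-plane splitting entirely by peeling off a single unit summand $\qf{\ep_1}$ and feeding $\Z_p^{\times} \repd K$ (obtained from part (i) applied to a binary summand of $K$) into Lemma~\ref{split by unit}; only in the binary isotropic case do you identify $L$ with $\Hy \cong \widehat{\Hy}$ and invoke Example~\ref{3.1}, as the paper does. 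The trade-off: the paper's argument is shorter on the page but rests on the full classification of unimodular lattices over non-dyadic local rings, whereas yours is self-contained modulo Hensel's lemma and reuses the paper's own Lemma~\ref{split by unit}, mirroring the decomposition strategy of Section 5 --- at the cost of handling the rank-2 isotropic case separately rather than absorbing it into one structural statement. All the individual steps check out: the two value sets in $\mathbb{F}_p$ each have $(p+1)/2$ elements and so must meet, the nonvanishing of $\bar u$ guarantees a unit coordinate for Hensel, and the basis change $\operatorname{diag}(\tfrac{1}{2},1)$ is unimodular over $\Z_p$ for odd $p$.
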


\begin{proof}
$i)$ is simply a restatement of \cite[92:1(b)]{OM}. For $ii)$, it suffices to note that either $L \cong \mathbb{H}$ or $L \cong \mathbb{H} \perp \lan 1, \ldots , 1, -d \ran$, where $dL = d(\Z^{\times}_p)^2$, by \cite[92:1]{OM}, and apply the result of Example~\ref{3.1}.
\end{proof}

For an integral $\Z_p$-lattice $L$, we have $\nm L = \sca L \subseteq \Z_p$, and $L$ has a Jordan splitting of the form
$$L \cong L_{(0)} \perp \ldots \perp L_{(t)},$$
where each $L_{i}$ has an orthogonal basis.

\begin{proposition}\label{non dyadic}
Let $p$ be an odd prime and let $L$ be an integral $\Z_p$-lattice with $\rk{L} \geq 5$. If $L$ is $\Z_p$-universal, then $L$ is primitively $\Z_p$- universal.
\end{proposition}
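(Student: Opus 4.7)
The plan is to exhibit an orthogonal decomposition $L \cong M \perp K$ with $M$ being $\Z_p$-universal and $K \neq 0$, so that Lemma~\ref{main argument} yields the primitive universality of $L$; when no such proper decomposition exists, I instead appeal to Lemma~\ref{unimod}(ii) applied directly to an appropriately rescaled Jordan component. Write the Jordan decomposition $L \cong L_{(0)} \perp \ldots \perp L_{(t)}$. Since $L$ is $\Z_p$-universal, $\sca L = \Z_p$ and $L_{(0)} \neq 0$; moreover, applying Lemma~\ref{rep of units}(i) to $L = L_{(0)} \perp (L_{(1)} \perp \ldots \perp L_{(t)})$ (using $\nm(L_{(1)} \perp \ldots \perp L_{(t)}) \subseteq p\Z_p = 2p\Z_p$ for odd $p$) rules out $\rank L_{(0)} = 1$. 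Thus $\rank L_{(0)} \geq 2$.

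Suppose first that $\rank L_{(0)} \geq 3$, or that $L_{(0)}$ is isotropic of rank $2$. Then Lemma~\ref{unimod}(ii) renders $L_{(0)}$ itself $\Z_p$-universal. If $L = L_{(0)}$, its rank is at least $5$ and Lemma~\ref{unimod}(ii) gives primitive universality directly; otherwise Lemma~\ref{main argument} applies to the splitting $L = L_{(0)} \perp (L_{(1)} \perp \ldots \perp L_{(t)})$.

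The remaining case is $L_{(0)} \cong \langle \varepsilon_1, \varepsilon_2 \rangle$ anisotropic. Example~\ref{3.3} gives $q^*(L_{(0)}) \subseteq \Z_p^\times$, whence $q(L_{(0)}) \cap p\Z_p \subseteq p^2\Z_p$. Writing $K := L_{(1)} \perp \ldots \perp L_{(t)}$, any representation $p\mu = q(v_0) + q(w)$ with $p\mu \in p\Z_p^\times$, $v_0 \in L_{(0)}$, $w \in K$ must have $q(v_0) \in p^2\Z_p$, hence $q(w) \equiv p\mu \pmod{p^2}$. Universality of $L$ therefore forces the quadratic form on the rescaled lattice $K^{p^{-1}}$ to hit every residue in $\mathbb{F}_p$ modulo $p$. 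Since the leading Jordan component of $K^{p^{-1}}$ is the unimodular lattice $L_{(1)}^{p^{-1}}$, and a unimodular $\Z_p$-lattice of rank at most $1$ reduces modulo $p$ to a form whose image consists only of the squares together with zero, it follows that $\rank L_{(1)} \geq 2$. Consequently $L_{(0)} \perp L_{(1)}$ represents $\Z_p^\times$ (via $L_{(0)}$, by Lemma~\ref{unimod}(i)) and $p\Z_p^\times$ (via $L_{(1)}$, by Lemma~\ref{unimod}(i) applied to $L_{(1)}^{p^{-1}}$), hence is $\Z_p$-universal. If $L \neq L_{(0)} \perp L_{(1)}$, Lemma~\ref{main argument} concludes; otherwise $\rank L_{(1)} \geq 3$, and one argues directly: $L_{(0)}$ primitively represents every unit, while Lemma~\ref{unimod}(ii) applied to $L_{(1)}^{p^{-1}}$ primitively represents every nonzero element of $p\Z_p$ via $L_{(1)}$, with primitivity in either summand inherited by $L$.

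The principal obstacle is the argument forcing $\rank L_{(1)} \geq 2$ in the anisotropic binary case, which rests on the squareclass structure of the values missed by an anisotropic binary unimodular $\Z_p$-lattice; the rest of the proof reduces to Jordan-component bookkeeping combined with Lemmas~\ref{rep of units}, \ref{main argument}, and \ref{unimod}.
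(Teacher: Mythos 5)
Your proof is correct and follows essentially the same route as the paper: the same Jordan splitting, the same case analysis on the ranks of $L_{(0)}$ and $L_{(1)}$, and the same appeals to Lemma~\ref{unimod} and Lemma~\ref{main argument}. The only cosmetic difference is that you exclude $\rank L_{(1)}\leq 1$ by a direct reduction-mod-$p$ count of the values of $K^{p^{-1}}$, where the paper invokes Lemma~\ref{rep of units}(i) and (iii); both arguments are sound.
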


\begin{proof}
Assume that $L$ is $\Z_p$-universal. Then $\nm L = \sca L = \Z_p$ and, by Lemma~\ref{rep of units}$(i)$, $ r_0 \geq 2$. If $L_{(0)}$ is isotropic, then $L$ is primitively $\Z_p$-universal by Lemma~\ref{unimod}$(ii)$. So we need only consider further the case when $r_0 =2$ and $L_{0}$ is anisotropic. In this case, $q^*(L_{(0)}) \cap p\Z_p = \emptyset$ by Example~\ref{3.3}. If $r_1= 0$, it would follow from Lemma~\ref{rep of units}$(iii)$ that $p\Z_p \not \rightarrow L$, contrary to the assumption that $L$ is $\Z_p$-universal. So to complete the proof we consider three possibilities for $r_1=\rk{L_{(1)}}$:

$r_1 =1$: Since $L_{(0)}$ is anisotropic, $ p\Z^{\times}_p \rightarrow L_{(0)} \perp L_{(1)} $ holds if and only if $p\Z^{\times}_p \rightarrow L_{(1)} \perp p L_{(0)}$. But $\Z^{\times}_p \not \rightarrow (L_{(1)} \perp p L_{(0)})^{1/p}$ by Lemma~\ref{rep of units}$(i)$. Hence $p\Z^{\times}_p \not \rightarrow L$ by Lemma~\ref{rep of units}$(iii)$, contrary to the assumption that $L$ is $\Z_p$-universal.

$r_1 =2$: In this case, $ \Z^{\times}_p \rightarrow L_{(0)} $ and $ p\Z^{\times}_p \rightarrow L_{(1)}$ by Lemma~\ref{unimod}$(i)$. So $L_{(0)} \perp L_{(1)} $ is $\Z_p$-universal and hence $L$ is primitively $\Z_p$-universal by Lemma \ref{main argument} since $\rk{L} \geq 5$.

$r_1 \geq 3$: By Lemma~\ref{unimod}$(ii)$, $p\Z_p \prepd L_{(1)}$. Since $ \Z_p \rightarrow  L_{(0)}$ by Lemma~\ref{unimod}$(i)$, it follows that $L$ is primitively $\Z_p$-universal. 
\end{proof}

\begin{remark}
The conclusion of Proposition~\ref{non dyadic} does not hold when $\rk{L} = 4$, as seen by Example \ref{3.4}.
\end{remark}

\begin{remark}
The arguments in this section would be unchanged if $\Z_p$ were replaced by any non-dyadic local ring.
\end{remark}

\section{Primitively universal $\Z_2$-lattices}

The goal of this section is to prove the $2$-adic analogue of Proposition~\ref{non dyadic}. This result appears as Proposition~\ref{u implies pu p=2}. As the proof is more complicated in the $2$-adic case, this section is divided into three subsections. The first subsection gives a complete analysis of the $\Z_2$-universality and primitive $\Z_2$-universality of modular $\Z_2$-lattices of various ranks. The second subsection is devoted to identifying $\Z_2$-lattices that are $\Z_2$-universal, which will play a key role in the proof of Proposition~\ref{u implies pu p=2} presented in the third subsection.

Before proceeding, we make several observations and establish some terminology that will be in effect throughout the section. Note first that in order to prove that an integral $\Z_2$-lattice is $\Z_2$-universal, it suffices to show that it represents all units and elements of order one (i.e., elements of the set $2\Z_2^{\times}$). For if $\Z_2^{\times}\repd L$ (or $2\Z_2^{\times}\repd L$), then $L$ represents all elements of $\Z_2$ of odd (or even) order. We will refer to a set of elements of $\Z_2$ as being {\em independent} if they are in distinct squareclasses. So, in order to prove that an integral $\Z_2$-lattice is $\Z_2$-universal, it suffices to show that $L$ represents a set of four independent units and a set of four independent elements of $2\Z_2^{\times}$. Throughout this section, the letters $\ep$ or $\ep_i$ will always denote elements of $\Z_2^{\times}$.

\subsection{Modular $\Z_2$-lattices}

In this subsection we determine which modular $\Z_2$-lattices are either $\Z_2$-universal or primitively $\Z_2$-universal. In particular, such a $\Z_2$-lattice $L$ must have $\nm L=\Z_2$ and hence must be either an improper $\frac{1}{2}\Z_2$-modular lattice or a proper unimodular lattice.

\begin{proposition}\label{improper uni}

Let $L$ be an improper $\frac{1}{2}\Z_2$-modular $\Z_2$-lattice of rank $n$.

i) If $n>2$, then $L$ is primitively $\Z_2$-universal.

ii) If $n=2$, then $L$ is primitively $\Z_2$-universal if and only if $L$ is $\Z_2$-universal.

iii) If $n=2$ and $L$ is anisotropic, then $q^*(L)=\Z_2^{\times}$ and $q(L)\cap 2\Z_2^{\times}=\emptyset$.
\end{proposition}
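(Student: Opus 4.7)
The plan is to invoke the scaled form of the structure theorem in equation \eqref{improper unimodular}. Scaling each summand by $\tfrac{1}{2}$, every improper $\tfrac{1}{2}\Z_2$-modular $\Z_2$-lattice $L$ admits an orthogonal splitting
\[
L \cong \widehat{\Hy} \perp \cdots \perp \widehat{\Hy} \perp P,
\]
with $P$ binary and isometric to either $\widehat{\Hy}$ or $\widehat{\A}$. In particular, the rank $n$ must be even, so the hypothesis $n > 2$ in $(i)$ is equivalent to $n \geq 4$, and the only possibilities when $n = 2$ are $L \cong \widehat{\Hy}$ and $L \cong \widehat{\A}$.

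For part $(i)$, with $n \geq 4$, at least one copy of $\widehat{\Hy}$ appears, so I would write $L \cong \widehat{\Hy} \perp K$ with $K$ a nonzero sublattice. Example~\ref{3.1} gives $q^*(\widehat{\Hy}) = \Z_2$, so $\widehat{\Hy}$ is in particular $\Z_2$-universal. Lemma~\ref{main argument} then yields immediately that $L$ is primitively $\Z_2$-universal.

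For $(ii)$ and $(iii)$, the two binary cases need to be examined separately. If $L \cong \widehat{\Hy}$, Example~\ref{3.1} shows $L$ is primitively $\Z_2$-universal and a fortiori $\Z_2$-universal. If $L \cong \widehat{\A}$, Example~\ref{3.2} gives $q^*(L) = \Z_2^{\times}$ and $q(L) = \{\alpha \in \Z_2 : \ord_2 \alpha \text{ even}\}$; in particular $2 \notin q(L)$, so $L$ is neither $\Z_2$-universal nor primitively $\Z_2$-universal. Combining these two cases gives the equivalence in $(ii)$, since both properties single out $\widehat{\Hy}$. For $(iii)$, isotropy of $\widehat{\Hy}$ rules it out under the anisotropy hypothesis, forcing $L \cong \widehat{\A}$, and the identities $q^*(L) = \Z_2^{\times}$ and $q(L) \cap 2\Z_2^{\times} = \emptyset$ are then exactly the content of Example~\ref{3.2}.

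No genuine obstacle is anticipated; the proof is a bookkeeping exercise combining the structure theorem with the two binary computations already recorded. The one point that deserves to be flagged explicitly is the observation that improper $\tfrac{1}{2}\Z_2$-modular lattices have even rank, which is what allows the splitting in $(i)$ to supply a hyperbolic summand and which ensures that the case analysis $n = 2$ versus $n \geq 4$ in the statement is exhaustive.
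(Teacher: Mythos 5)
Your proof is correct and is exactly the argument the paper intends: the paper's own proof is the one-line remark that the result follows from the splitting (2.1) together with Examples~\ref{3.1} and \ref{3.2}, and you have simply written out that bookkeeping (including the correct observations that the rank is even and that $2\notin q(\widehat{\A})$). No issues.
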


\begin{proof}
These results follow immediately from (\ref{improper unimodular}) and Examples~\ref{3.1} and \ref{3.2}. 
\end{proof}

\begin{proposition}\label{uni rk 2}
Let $L$ be a proper unimodular $\Z_2$-lattice of rank 2. Then $L$ is not $\Z_2$-universal. Moreover,

i) if $dL\equiv 3\modfour$, then $\Z_2^{\times}\rightarrow L$ and $q(L)\cap 2\Z_2^{\times}=\emptyset$;

ii) if $dL\equiv 1\modfour$, then $q(L) \cap \Z_2^{\times}\neq\emptyset$ and \[q(L) \cap \Z_2^{\times}
= \{\alpha \in \Z_2^{\times} : \alpha \equiv \varepsilon\modfour\}.\] for any $\varepsilon \in q(L) \cap \Z_2^{\times}$
\end{proposition}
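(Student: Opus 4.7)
The plan is to diagonalize and perform a parity analysis. Since $L$ is a proper unimodular $\Z_2$-lattice of rank $2$, it is diagonalizable, so $L\cong\qf{\ep_1,\ep_2}$ for some $\ep_1,\ep_2\in\Z_2^{\times}$, and $dL=\ep_1\ep_2$ as a squareclass. Fixing an orthogonal basis $\{v_1,v_2\}$, any $v\in L$ satisfies $q(v)=a^2\ep_1+b^2\ep_2$ for $a,b\in\Z_2$. The fact driving everything is that $a^2\equiv 1\modeight$ when $a\in\Z_2^{\times}$, while $a^2\in 4\Z_2$ when $a\in 2\Z_2$, and I would split the analysis on the parities of $a$ and $b$.

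First I would identify $q(L)\cap\Z_2^{\times}$. If both $a,b\in 2\Z_2$ then $q(v)\in 4\Z_2$; if both are units then $q(v)\equiv\ep_1+\ep_2\modeight$, which lies in $2\Z_2$. So any represented unit comes from exactly one of $a,b$ being a unit, forcing $q(v)\equiv\ep_1\modfour$ or $q(v)\equiv\ep_2\modfour$. Conversely, given $\alpha\in\Z_2^{\times}$ with $\alpha\equiv\ep_1\modfour$, either $\alpha\equiv\ep_1\modeight$ or $\alpha-4\ep_2\equiv\ep_1\modeight$ (since $4\ep_2\equiv 4\modeight$). The Local Square Theorem then writes the relevant element as $a^2\ep_1$ for some $a\in\Z_2^{\times}$, exhibiting $\alpha$ as $q(av_1)$ or $q(av_1+2v_2)$. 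By symmetry, $q(L)\cap\Z_2^{\times}=\{\alpha\in\Z_2^{\times}:\alpha\equiv\ep_1\modfour\text{ or }\alpha\equiv\ep_2\modfour\}$.

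Next, any element of $q(L)\cap 2\Z_2^{\times}$ forces both $a,b\in\Z_2^{\times}$, since the remaining parity cases give either elements of $4\Z_2$ or units; so $q(v)\equiv\ep_1+\ep_2\modeight$. The two cases of the proposition then separate on the class of $\ep_1+\ep_2$ modulo $4\Z_2$. In case (i), $dL\equiv 3\modfour$ forces $\ep_1\not\equiv\ep_2\modfour$, so $\{\ep_1,\ep_2\}$ already hits both residues mod $4\Z_2$ and the unit analysis gives $\Z_2^{\times}\repd L$; also $\ep_1+\ep_2\equiv 0\modfour$, so $q(L)\cap 2\Z_2^{\times}=\emptyset$. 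In case (ii), $\ep_1\equiv\ep_2\modfour$, and the unit analysis collapses to the claimed description of $q(L)\cap\Z_2^{\times}$, a proper nonempty subset of $\Z_2^{\times}$. In either case $L$ misses elements of $\Z_2$, so is not $\Z_2$-universal.

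The step needing the most care is the converse in the units analysis: one must reach every unit $\equiv\ep_1\modfour$, not just those $\equiv\ep_1\modeight$. The trick of shifting by $4\ep_2$ to cross into the second mod-$8$ squareclass inside a mod-$4$ residue class is the only nontrivial move; the rest is parity bookkeeping together with a direct application of the Local Square Theorem.
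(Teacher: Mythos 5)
Your proof is correct, and for part (ii) it takes a genuinely different route from the paper. The paper's argument for (ii) is non-elementary: it observes that $L$ is a $\Z_2$-maximal lattice on an anisotropic space $V$, invokes O'Meara's Theorem 91:1 to get $q(L)=q(V)\cap\Z_2$, and then computes which units are represented by $V$ via Hasse invariants and the Hilbert symbol $(\alpha,-\ep_1\ep_2)_2$. You instead determine $q(L)\cap\Z_2^{\times}$ directly in all cases by the parity analysis on $q(v)=a^2\ep_1+b^2\ep_2$: the forward inclusion from the observation that a represented unit forces exactly one of $a,b$ to be a unit, and the converse by the shift-by-$4\ep_2$ trick to move between the two mod-$8$ squareclasses inside a mod-$4$ residue class, followed by the Local Square Theorem. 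This is more elementary and self-contained (it uses nothing beyond the facts in the paper's Section 2), at the cost of being slightly longer; the paper's route buys brevity by importing the maximal-lattice and Hilbert-symbol machinery. For part (i) the two arguments are essentially the same in spirit -- the paper exhibits the four independent units $\ep_1,\ep_2,\ep_1+4\ep_2,\ep_2+4\ep_1$ and checks $\ep_1+\ep_2\equiv 0\modfour$, which is exactly the content of your unified description $q(L)\cap\Z_2^{\times}=\{\alpha:\alpha\equiv\ep_1\text{ or }\ep_2\modfour\}$ specialized to $\ep_1\not\equiv\ep_2\modfour$. Your treatment also makes the "not $\Z_2$-universal" claim explicit in both cases, which the paper leaves implicit.
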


\begin{proof}
$i)$ Let $L\cong \qf{\ep_1,\ep_2}$. Then it can be verified that $\ep_1,\ep_2,\ep_1+4\ep_2,\ep_2+4\ep_1$ form an independent set of four units represented by $L$. For example, suppose that $\ep_2\equiv \ep_1+4\ep_2 \modeight$. Then $-3\ep_2\equiv \ep_1 \modeight$, and it would follow that $\ep_1\ep_2\equiv 5 \modeight$, contrary to the assumption that $dL\equiv 3\modfour$. The other verifications are similar. For the final assertion, it suffices to note that for any $x,y\in \Z_2^{\times}$, $\ep_1x^2+\ep_2y^2\equiv \ep_1+\ep_2\modeight$ and $\ep_1+\ep_2\equiv 0\modfour$ since $\ep_2\equiv -\ep_1 \modfour$ in this case.

$ii)$ In this case, the underlying space $V$ is anisotropic since $dV\neq -1$. Since $L$ is a $\Z_2$-maximal lattice on $V$, it follows from \cite[Theorem 91:1]{OM} that $q(L)=q(V)\cap \Z_2$. Let $L\cong \qf{\ep_1,\ep_2}$. For any $\alpha \in \Z_2^{\times}$, by comparing the Hasse invariants of the $\Q_2$-spaces $\qf{\ep_1,\ep_2}$ and $\qf{\alpha, \ep_1\ep_2}$, we see that $\alpha\repd V$ if and only if $(\alpha,-\ep_1\ep_2)_2=(\ep_1,\ep_2)_2$. Since $-\ep_1\ep_2\equiv 3\modfour$, the value of the symbol $(\alpha,-\ep_1\ep_2)_2$ is determined by the congruence of $\alpha$ modulo $4\Z_2$, which verifies the assertion. 
\end{proof}

\begin{corollary}\label{split binary}
Let $L\cong M\perp K$, with $M$ proper unimodular of rank 2 and $\sca K\subseteq 2\Z_2$. If $L$ is $\Z_2$-universal, then $\nm K = 2\Z_2$.
\end{corollary}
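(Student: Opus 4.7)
I will prove the contrapositive: assuming $\nm K \neq 2\Z_2$, I will show that $L$ is not $\Z_2$-universal. The general containment $\nm K \subseteq \sca K$ together with the hypothesis $\sca K \subseteq 2\Z_2$ gives $\nm K \subseteq 2\Z_2$, so the only alternative to $\nm K = 2\Z_2$ is $\nm K \subseteq 4\Z_2$; this is the case I need to rule out.

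The central observation driving the argument is that when $\nm K \subseteq 4\Z_2$, every vector $y \in K$ satisfies $q(y) \equiv 0 \modfour$. Consequently, any representation $\alpha = q(x) + q(y)$ with $x \in M$ and $y \in K$ satisfies $q(x) \equiv \alpha \modfour$, and in particular $q(x)$ lies in the same one of $\Z_2^{\times}$, $2\Z_2^{\times}$, or $4\Z_2$ as $\alpha$ does. This reduces the question of which units and half-units are represented by $L$ to the analogous question about $M$ alone, which is completely controlled by Proposition~\ref{uni rk 2}.

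The proof then splits along the two cases distinguished there (exhaustive because $M$ is proper unimodular, so $dM \in \Z_2^{\times}$ is odd mod $4$). If $dM \equiv 3 \modfour$, Proposition~\ref{uni rk 2}$(i)$ gives $q(M) \cap 2\Z_2^{\times} = \emptyset$, so the observation above immediately prevents $L$ from representing any $\alpha \in 2\Z_2^{\times}$. If $dM \equiv 1 \modfour$, Proposition~\ref{uni rk 2}$(ii)$ shows that $q(M) \cap \Z_2^{\times}$ is contained in a single residue class modulo $4\Z_2$; the observation then confines the units represented by $L$ to that same class, covering at most two of the four unit squareclasses. Either way, $L$ misses an entire squareclass in $\Z_2^{\times} \cup 2\Z_2^{\times}$ and hence fails to be $\Z_2$-universal.

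I do not expect any substantial obstacle here: the argument is a short application of Proposition~\ref{uni rk 2} combined with the straightforward mod-$4$ reduction made possible by $\nm K \subseteq 4\Z_2$. The only thing to keep straight is that the two cases witness the failure of universality in different parts of $\Z_2$ — in $2\Z_2^{\times}$ when $dM \equiv 3 \modfour$, and in $\Z_2^{\times}$ when $dM \equiv 1 \modfour$.
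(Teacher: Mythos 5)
Your proof is correct and is essentially the paper's own argument: both split on $dM \bmod 4\Z_2$, use Proposition~\ref{uni rk 2} to constrain $q(M)$, and exploit $\nm K \subseteq 4\Z_2$ to reduce modulo $4\Z_2$, concluding that $L$ misses $2\Z_2^{\times}$ in one case and at least two unit squareclasses in the other.
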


\begin{proof} If $dM\equiv 3\modfour$ and $\nm K \subseteq 4\Z_2$, then it follows from Proposition~\ref{uni rk 2}$(i)$ that $q(L)\cap 2\Z_2^{\times}=\emptyset$, and so $L$ is not $\Z_2$-universal. So consider the case $dM\equiv 1\modfour$ and let $\ep\in q(M) \cap \Z_2^{\times}$. Suppose that $\nm K \subseteq 4\Z_2$ and $\lambda$ is any unit represented by $L$. Then $\lambda =\mu +\kappa$, where $\mu\in q(M) \cap \Z_2^{\times}$ and $\kappa \in q(K)\subseteq 4\Z_2$. By Proposition~\ref{uni rk 2}$(ii)$, $\lambda \equiv \ep \modfour$. Thus, $\Z_2^{\times}\nrepd L$ and $L$ is not $\Z_2$-universal. 
\end{proof}

\begin{proposition}\label{uni rk 3}
Let $L$ be a unimodular $\Z_2$-lattice of rank 3. Then the following are equivalent:

i) $L$ is $\Z_2$-universal;

ii) $L$ is primitively $\Z_2$-universal;

iii) $L\cong \qf{\ep_1,\ep_2,\ep_3}$ and there exist $i,j \in \{1,2,3\}$ such that $\ep_i\equiv -\ep_j\modfour$.
\end{proposition}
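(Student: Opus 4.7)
My plan is to first reduce to the diagonal case by noting that a unimodular $\Z_2$-lattice of odd rank cannot be improper (by the splitting~(\ref{improper unimodular}), which has even rank), so $L$ is proper and hence diagonalizable: $L\cong \qf{\ep_1,\ep_2,\ep_3}$. The implication (ii) $\Rightarrow$ (i) is then immediate from the definitions, so the work splits into (iii) $\Rightarrow$ (ii) and (i) $\Rightarrow$ (iii).

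For (iii) $\Rightarrow$ (ii), I will assume without loss of generality that $\ep_1\equiv -\ep_2\modfour$ and look at the binary sublattice $K=\qf{\ep_1,\ep_2}$. Its discriminant satisfies $dK=\ep_1\ep_2\equiv -\ep_1^2\equiv 3\modfour$, so Proposition~\ref{uni rk 2}(i) yields $\Z_2^{\times}\repd K$. Since $K$ is unimodular and hence integral, Lemma~\ref{split by unit} applied to the orthogonal splitting $L\cong \qf{\ep_3}\perp K$ gives primitive $\Z_2$-universality of $L$.

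The bulk of the work is in (i) $\Rightarrow$ (iii), for which I argue the contrapositive: if condition (iii) fails, then $\ep_i\not\equiv -\ep_j\modfour$ for every pair, so all three $\ep_i$ share a common residue modulo 4 (call it $r\in\{1,3\}$); I will show $L$ fails to represent some unit squareclass. Any $v=(x_1,x_2,x_3)$ with $q(v)\in \Z_2^{\times}$ must have an odd number of unit components, so there are two cases. If exactly one $x_i$ is a unit, say $x_1$, then writing $x_2=2y_2$, $x_3=2y_3$ gives $q(v)\equiv \ep_1+4(\ep_2 y_2^2+\ep_3 y_3^2)\pmod 8$ by the Local Square Theorem, and since the parity of $\ep_2 y_2^2+\ep_3 y_3^2$ can be arranged to be either even or odd, the correction $4w\pmod 8$ takes both values in $\{0,4\}$; thus these representations realize exactly the two unit squareclasses congruent to $\ep_1\equiv r\modfour$. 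If all three $x_i$ are units, then each square is $\equiv 1\pmod 8$, so $q(v)\equiv \ep_1+\ep_2+\ep_3\pmod 8$, a single fixed squareclass; a direct check (enumerating how many $\ep_i$ lie in each mod-8 class consistent with $r$) shows that this sum is always congruent to $3r\modfour$, i.e.\ to the residue opposite to $r$ mod 4. Hence the total set of represented unit squareclasses has cardinality at most three, so some unit is not represented by $L$ and $L$ is not $\Z_2$-universal.

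The main obstacle is the verification in the final case: one must confirm both that the 1-unit configurations cover both squareclasses of their common residue class (so the correction $4w\pmod 8$ is genuinely surjective onto $\{0,4\}$) and that the 3-unit sum $\ep_1+\ep_2+\ep_3\pmod 8$ always falls in the \emph{opposite} mod-4 residue class under the failure of (iii). Both are elementary mod-8 enumerations, but they are the whole content of the direction (i) $\Rightarrow$ (iii).
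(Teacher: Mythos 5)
Your proposal is correct and takes essentially the same route as the paper: (iii)$\Rightarrow$(ii) via Proposition~\ref{uni rk 2}(i) applied to $\qf{\ep_i,\ep_j}$ together with Lemma~\ref{split by unit}, and (i)$\Rightarrow$(iii) by the contrapositive, splitting primitive unit representations into the one-unit and three-unit cases and counting that at most three of the four unit squareclasses can occur. Your extra observations (odd rank forces $L$ to be proper, hence diagonalizable; the exact mod-8 bookkeeping in the two cases) only make explicit what the paper leaves implicit.
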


\begin{proof}
$ii)\implies i)$ is clear, and $iii)\implies ii)$ follows from Proposition~\ref{uni rk 2}$(i)$ and Lemma~\ref{split by unit}. So it remains to prove $i)\implies iii)$. For this, suppose that $iii)$ is not true. So $L\cong \qf{\ep_1,\ep_2,\ep_3}$ with $\ep_i\equiv \ep_j\modfour$ for all $1 \leq i,j\leq 3$. Let $\lambda \in q(L)\cap \Z_2^{\times}$. So $\lambda = \sum_{i=1}^3 a_1^2\ep_i$ with $a_i \in \Z_2$ and either one or all three of the $a_i$'s units. If exactly one $a_i \in \Z_2^{\times}$, then $\lambda \equiv \ep_i\modfour$; if all $a_i \in \Z_2^{\times}$, then $\lambda \equiv \ep_1+\ep_2+\ep_3\modeight$. Hence, $L$ can represent at most three squareclasses of units and so $L$ is not $\Z_2$-universal. This completes the proof. 
\end{proof}

\begin{proposition}\label{uni rk 4}
Let $L$ be a proper unimodular $\Z_2$-lattice of rank 4. Then

i) $L$ is $\Z_2$-universal;

ii) $L$ is primitively $\Z_2$-universal if and only if $\{4,8\}\subseteq q^*(L)$.
\end{proposition}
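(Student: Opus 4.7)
The plan is to exploit the dichotomy, based on an orthogonal splitting $L\cong\qf{\ep_1,\ep_2,\ep_3,\ep_4}$ (available since $L$ is proper unimodular), of whether two diagonal entries lie in opposite residue classes mod $4$.

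\emph{Case A:} Suppose there exist distinct $i,j$ with $\ep_i+\ep_j\equiv 0\pmod 4$. Then $\qf{\ep_i,\ep_j}$ is a proper unimodular binary lattice with discriminant $\equiv 3\pmod 4$, so Proposition~\ref{uni rk 2}(i) yields $\Z_2^{\times}\repd\qf{\ep_i,\ep_j}$. Lemma~\ref{split by unit} then gives that $\qf{\ep_k}\perp\qf{\ep_i,\ep_j}$ is primitively $\Z_2$-universal for any remaining index $k$, and this passes to $L$ because a primitive representation in an orthogonal direct summand remains primitive in $L$ (appending a zero in the complementary summand does not change the gcd of the coordinates). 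In this case $L$ is itself primitively $\Z_2$-universal, so both (i) and the nontrivial ``if'' of (ii) are immediate.

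\emph{Case B:} All four $\ep_i$ are congruent mod $4$. For part (i), I enumerate $q(v)=\sum_i\ep_ix_i^2\pmod 8$ using $x_i^2\pmod 8\in\{0,1,4\}$ (for $x_i\in 4\Z_2$, $\Z_2^{\times}$, or $2\Z_2\setminus 4\Z_2$, respectively) to see that every squareclass of $\Z_2^{\times}$ is attained, and refine to mod $16$, exploiting $x_i^2\in\{1,9\}\pmod{16}$ for $x_i\in\Z_2^{\times}$, to reach every squareclass of $2\Z_2^{\times}$; Hensel's lemma then lifts these congruence solutions to exact values, giving $\Z_2$-universality. For the ``if'' direction of (ii), I show that $\{4,8\}\subseteq q^*(L)$ is inconsistent with Case B. Assume without loss of generality $\ep_i\equiv 1\pmod 4$ for every $i$ (the $\equiv 3\pmod 4$ case is symmetric). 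For a primitive $v$, let $k\in\{1,2,3,4\}$ denote the number of unit coordinates; the same mod-$8$ enumeration yields
\[
q(v)\pmod 8\in
\begin{cases}
\{1,5\} & \text{if } k=1,\\
\{2,6\} & \text{if } k=2,\\
\{3,7\} & \text{if } k=3,\\
\{s\} & \text{if } k=4,
\end{cases}
\]
where $s:=\sum_i\ep_i\pmod 8\in\{0,4\}$. Hence $q^*(L)\pmod 8$ omits exactly one of $0$ or $4$, so either $4$ or $8$ fails to be primitively represented, contradicting the hypothesis. The ``only if'' of (ii) is immediate since $4,8\in\Z_2\setminus\{0\}$.

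The main technical obstacle is the Case B portion of (i), where mod $8$ alone only reaches the two classes $\{2,6\}$ within $2\Z_2^{\times}$; one must push to mod $16$ (exploiting the two squareclasses $\{1,9\}$ of odd squares mod $16$) and then close with Hensel's lemma to realize each of the four squareclasses as an actual value of $q$.
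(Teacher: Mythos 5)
Your proof is correct and follows essentially the same route as the paper's: split on whether some pair satisfies $\ep_i\equiv-\ep_j\modfour$ (handled via the binary unimodular case and Lemma~\ref{split by unit}) versus all $\ep_i$ congruent mod $4$ (handled by enumerating the squareclasses of $q(v)$ according to the number of unit coordinates). Your explicit mod-$8$/mod-$16$ enumeration in Case B simply fills in what the paper dismisses as ``routine,'' and your argument for part (ii) is in substance identical to the paper's.
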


\begin{proof}
There is a splitting $L\cong \qf{\ep_1,\ep_2,\ep_3,\ep_4}$. If $\ep_i\equiv -\ep_j\modfour$ for some $i\neq j$, then $L$ is primitively $\Z_2$-universal by Proposition~\ref{uni rk 3} and Lemma~\ref{split by unit}.

$i)$ It remains to show that $L$ is $\Z_2$-universal when $\ep_i\equiv \ep_j\modfour$ for all $1\leq i,j\leq 4$. By scaling $L$ by a unit if necessary, it suffices to consider the possibilities $L\cong \qf{1,1,a,b}$, where $a,b$ equal $1$ or $5$. In these cases, it is routine to show that $L$ represents the four squareclasses of units and the four squareclasses of twice units.

$ii)$ It suffices to prove the "if" statement. So assume that $L$ is not primitively $\Z_2$-universal. So $\ep_i\equiv \ep_j\modfour$ for all $1\leq i,j\leq 4$. Let $\lambda \in q^*(L)$. So $\lambda =\sum_{i=1}^4 a_i^2\ep_i$ with all $a_i\in\Z_2$ and at least one $a_i\in\Z_2^{\times}$. If exactly one or three of the $a_i$'s are in $\Z_2^{\times}$, then $\lambda \in \Z_2^{\times}$; if exactly two of the $a_i$'s are in $\Z_2^{\times}$, then $\lambda \in 2\Z_2^{\times}$, since $\ep_i\equiv \ep_j\modfour$. So if $\lambda \in 4\Z_2$, it must be that $a_i \in \Z_2^{\times}$ for all $i=1,\ldots,4$ and it follows that $\lambda \equiv \ep_1+\cdots +\ep_4 \modeight$. Moreover $\ep_1+\cdots +\ep_4 \in 4\Z_2$ since $\ep_i\equiv \ep_j\modfour$. If $\ep_1+\cdots +\ep_4 \in 4\Z_2^{\times}$, then $8\not\prepd L$. If $\ep_1+\cdots +\ep_4 \in 8\Z_2$, then $4\not\prepd L$. This completes the proof. 
\end{proof}

\begin{proposition}\label{uni rk 5}
Let $L$ be a unimodular $\Z_2$-lattice of rank exceeding 4. Then $L$ is primitively $\Z_2$-universal.
\end{proposition}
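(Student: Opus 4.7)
The plan is to reduce the statement to Proposition~\ref{uni rk 4} by splitting off a proper unimodular rank $4$ sublattice. Primitive $\Z_2$-universality requires, in particular, the representation of units, so we may assume $\nm L=\Z_2$; in other words, $L$ is proper unimodular, and hence admits a diagonal orthogonal basis. Write
\[
L\cong \qf{\ep_1,\ldots,\ep_n}
\]
with $\ep_i\in\Z_2^{\times}$ and $n\geq 5$, and set $M:=\qf{\ep_1,\ep_2,\ep_3,\ep_4}$ and $K:=\qf{\ep_5,\ldots,\ep_n}$, so that $L\cong M\perp K$ with both summands nonzero.

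The sublattice $M$ is a proper unimodular $\Z_2$-lattice of rank $4$, so Proposition~\ref{uni rk 4}$(i)$ guarantees that $M$ is $\Z_2$-universal, unconditionally on the unit classes $\ep_1,\ldots,\ep_4$. Applying Lemma~\ref{main argument} to the splitting $L\cong M\perp K$ then yields that $L$ is primitively $\Z_2$-universal, completing the argument.

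The only point worth highlighting is that Lemma~\ref{main argument} requires merely $\Z_2$-universality of one orthogonal summand, not primitive $\Z_2$-universality. This is precisely what Proposition~\ref{uni rk 4}$(i)$ delivers for $M$, even though $M$ itself need not be primitively $\Z_2$-universal: by Proposition~\ref{uni rk 4}$(ii)$, primitive $\Z_2$-universality of a rank $4$ proper unimodular lattice depends on whether $\{4,8\}\subseteq q^*(M)$, a condition that can genuinely fail when, for instance, all $\ep_i$ are congruent mod $4\Z_2$. Thus the presence of the extra variables in $K$ is what upgrades $\Z_2$-universality of $M$ to primitive $\Z_2$-universality of $L$, and no obstacle beyond selecting the splitting arises.
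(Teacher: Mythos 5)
Your proof is correct and essentially matches the paper's one-line argument: both reduce the statement to Proposition~\ref{uni rk 4}$(i)$ and then upgrade $\Z_2$-universality of a sublattice to primitive $\Z_2$-universality of $L$ via a splitting lemma from Section 2 (you invoke Lemma~\ref{main argument} on the splitting $M\perp K$ with $M$ of rank $4$, while the paper invokes Lemma~\ref{split by unit} on $\qf{\ep_1}\perp K$ with $K$ of rank $\geq 4$; the two are interchangeable here). Your remark that Proposition~\ref{uni rk 4}$(ii)$ may fail for $M$ while Lemma~\ref{main argument} only needs $\Z_2$-universality is exactly the right point to emphasize.
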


\begin{proof}
Follows from Proposition~\ref{uni rk 4}$(i)$ and Lemma~\ref{split by unit}. 
\end{proof}

\begin{remark}
Corollary 2 of \cite{nB} follows immediately from Lemma~\ref{unimod}$(ii)$, Proposition~\ref{uni rk 4}$(ii)$ and Proposition~\ref{uni rk 5}.
\end{remark}

\subsection{Some $\Z_2$-universal lattices}

The goal of this subsection is to build up an inventory of integral $\Z_2$-lattices that are $\Z_2$-universal. We begin by identifying lattices that represent all units or twice units.

\begin{lemma}\label{reps units}
All $\Z_2$-lattices of the following types represent $\Z_2^{\times}$:

i) $\qf{\ep_1,2\ep_2,\lambda\ep_3}$, where $\lambda=1$ or $4$;

ii) $\qf{\ep_1,\ep_2,\ep_3,\lambda\ep_4}$, where $\lambda=1$ or $4$;

iii) $\qf{\ep_1,2\ep_2,2\ep_3,2\ep_4}$
\end{lemma}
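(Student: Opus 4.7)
The plan is to verify, for each of the three lattices $L$, that every squareclass of $\Z_2^\times$ lies in $q(L)$. By the Local Square Theorem, it suffices to exhibit, for each residue $u \in \{1,3,5,7\}$ modulo $8\Z_2$, a vector $v \in L$ with $q(v) \equiv u\modeight$.

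For part (ii) with $\lambda = 1$ the lattice is a proper unimodular $\Z_2$-lattice of rank $4$, so I would simply cite Proposition~\ref{uni rk 4}$(i)$. For $\lambda = 4$ I would first attempt to invoke Proposition~\ref{uni rk 3} on the rank-$3$ sublattice $\qf{\ep_1,\ep_2,\ep_3}$: if any two of its diagonal entries satisfy $\ep_i\equiv-\ep_j\modfour$, the sublattice is already $\Z_2$-universal and we are done. In the remaining case all three units $\ep_1,\ep_2,\ep_3$ share a common mod-$4$ class, yet $\qf{\ep_1,\ep_2,\ep_3}$ still represents two units in opposite mod-$4$ classes, namely $\ep_1$ via the first basis vector and $\ep_1+\ep_2+\ep_3$ via the all-ones vector. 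Allowing the fourth coordinate to contribute either $0$ or $4\ep_4$ then splits each mod-$4$ class into its two squareclasses modulo $8\Z_2$, covering all four.

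For parts (i) and (iii), I plan a direct enumeration using vectors with first coordinate equal to $1$ (and, when $\lambda=1$ in (i), third coordinate of the form $2m$ rather than $c$, so that the total stays odd). In (i) this yields the four candidate residues $\ep_1 + 2\ep_2 b^2 + 4\ep_3 c^2$ modulo $8\Z_2$ with $b,c\in\{0,1\}$, and these realize all four unit squareclasses because the $2\ep_2 b^2$ term toggles the class modulo $4\Z_2$ while the $4\ep_3 c^2$ term toggles the squareclass within each mod-$4$ class. In (iii), the analogous vectors give $\ep_1 + 2(\ep_2 b^2 + \ep_3 c^2 + \ep_4 d^2)$ modulo $8\Z_2$ with $(b,c,d)\in\{0,1\}^3$, so I would verify that the subset sums of $\{2\ep_2,2\ep_3,2\ep_4\}$ hit each of $0,2,4,6$ modulo $8\Z_2$, which after shifting by $\ep_1$ produces the four distinct squareclasses of units.

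The main obstacle will be the mod-$8\Z_2$ bookkeeping in (iii), since the way the eight subset sums of $\{2\ep_2,2\ep_3,2\ep_4\}$ are distributed among the residues $0,2,4,6$ depends on how many of $\ep_2,\ep_3,\ep_4$ are congruent to $1$ versus $3$ modulo $4\Z_2$. A short table over the four possible profiles of $(\ep_2,\ep_3,\ep_4)$ modulo $4\Z_2$ up to permutation should suffice, but this is the one step that calls for a genuine case check rather than a one-line reduction to an earlier proposition.
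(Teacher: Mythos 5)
Your proposal is correct and follows essentially the same route as the paper: dispose of the easy subcases by citing the rank-$3$ and rank-$4$ unimodular propositions, then exhibit four units in distinct squareclasses modulo $8\Z_2$ as values at explicit vectors (the paper's witnesses are exactly your $\ep_1+2\ep_2b^2+4\ep_3c^2$, $\ep_1+\ep_2+\ep_3+4\ep_4\{0,1\}$, and subset-sum families). The only cosmetic difference is in (iii), where the paper organizes the mod-$4$ case check as ``WLOG $\ep_3\equiv\ep_4$, then split on $\ep_2$'' rather than your four-profile table; both amount to the same verification.
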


\begin{proof}
$i)$ It suffices to prove the result for the lattice $L\cong \qf{\ep_1,2\ep_2,4\ep_3}$. In this case, it can be routinely verified that $\ep_1, \ep_1+2\ep_2, \ep_1+4\ep_3, \ep_1+2\ep_2+4\ep_3$ form an independent set of four units represented by $L$.

$ii)$ It suffices to prove the result for the lattice $L\cong \qf{\ep_1,\ep_2,\ep_3,4\ep_4}$. By Proposition~\ref{uni rk 3}, it suffices to consider the case $\ep_1\equiv \ep_2\equiv \ep_3\modfour$. Then $\ep_2+\ep_3 \in 2\Z_2^{\times}$ and it follows that $\ep_1, \ep_1+\ep_2+\ep_3, \ep_1+4\ep_4, \ep_1+\ep_2+\ep_3+4\ep_4$ form an independent set of four units represented by $L$.

$iii)$ Let $L\cong \qf{\ep_1,2\ep_2,2\ep_3,2\ep_4}$. At least two of $\ep_2,\ep_3,\ep_4$ are congruent modulo $4\Z_2$; without loss of generality, by re-indexing if necessary, we may assume that $\ep_3\equiv \ep_4\modfour$. If also $\ep_2\equiv \ep_3\modfour$, then $\ep_2+\ep_3\in 2\Z_2^{\times}$ and $\ep_3+\ep_4\in 2\Z_2^{\times}$. From this it follows that $\ep_1,\ep_1+2\ep_2, \ep_1+2\ep_2+2\ep_3, \ep_1+2\ep_2+2\ep_3+2\ep_4$ form an independent set of four units represented by $L$. Otherwise, $\ep_2\equiv -\ep_3\modfour$. Then $\ep_1,\ep_1+2\ep_2, \ep_1+2\ep_3, \ep_1+2\ep_3+2\ep_4$ form an independent set of four units represented by $L$. 
\end{proof}

\begin{lemma}\label{reps twice units}
All $\Z_2$-lattices of the following types represent $2\Z_2^{\times}$:

i) $\qf{\ep_1,\ep_2,\ep_3}$;

ii) $\qf{\ep_1,2\ep_2,\lambda\ep_3}$, where $\lambda = 2$ or $8$;

iii) $\qf{\ep_1,2\ep_2,4\ep_3,\lambda\ep_4}$, where $\lambda = 1$ or $4$;

iv) $\widehat{\A}\perp \qf{2\ep_1,\lambda\ep_2}$, where $\lambda =2,4,8$.
\end{lemma}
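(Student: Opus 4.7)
The plan is to mimic the proof of Lemma~\ref{reps units}: for each of the four parts, I produce an explicit independent set of four elements of $2\Z_2^{\times}$ that are represented by the given lattice. Since two elements of $2\Z_2^{\times}$ lie in the same squareclass exactly when they are congruent modulo $16\Z_2$, independence is verified by checking pairwise incongruence mod $16\Z_2$, using $a^2\equiv 1\modeight$ for $a\in \Z_2^{\times}$ and $a^2\pmod{16\Z_2}\in\{0,4\}$ for $a\in 2\Z_2$.

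For (i), $L\cong\qf{\ep_1,\ep_2,\ep_3}$, I would first invoke Proposition~\ref{uni rk 3}: if some pair satisfies $\ep_i\equiv -\ep_j\modfour$, then $L$ is $\Z_2$-universal and the conclusion is immediate. Otherwise all three $\ep_i$ are mutually congruent modulo $4\Z_2$, so $\ep_1+\ep_2 \in 2\Z_2^{\times}$, and the four $q$-values $\ep_1+\ep_2$, $\ep_1+\ep_2+4\ep_3$, $9\ep_1+\ep_2$, $9\ep_1+\ep_2+4\ep_3$ (arising from the vectors $(1,1,0)$, $(1,1,2)$, $(3,1,0)$, $(3,1,2)$) have pairwise differences in $\{\pm 4\ep_3,\pm 8\ep_1,\pm 8\ep_1\pm 4\ep_3\}$, all nonzero modulo $16\Z_2$.

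Cases (ii) and (iii) follow the same template. In (ii), taking $a_1\in\{0,2\}$, $a_2=1$, and $a_3\in\{0,\mu\}$ with $\mu=2$ for $\lambda=2$ and $\mu=1$ for $\lambda=8$, the four resulting $q$-values $2\ep_2$, $2\ep_2+4\ep_1$, $2\ep_2+8\ep_3$, $2\ep_2+4\ep_1+8\ep_3$ have pairwise differences in $\{\pm 4\ep_1,\pm 8\ep_3,\pm 4\ep_1\pm 8\ep_3\}$, again all nonzero modulo $16\Z_2$; note that all four lie in $2\Z_2^{\times}$ because the dominant term is $2\ep_2$. Case (iii) proceeds analogously, using the scale-$2$ component $2\ep_2$ to set the leading order at $1$ and perturbing by the scale-$1$ and scale-$4$ components.

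The main obstacle I expect is case (iv), which involves the non-diagonalizable summand $\widehat{\A}$. Here the strategy is to combine $q(x)\in \{0,4,12\}\subseteq q(\widehat{\A})$ (using Example~\ref{3.2} together with explicit vectors of $\widehat{\A}$ whose $q$-values equal $4$ and $12$) with $q(y)\in 2\Z_2^{\times}$ taken from $\qf{2\ep_1,\lambda\ep_2}$. A direct calculation shows that the achievable residues of $q(y)$ modulo $16\Z_2$ are $\{2\ep_1,2\ep_2,2\ep_1+8,2\ep_2+8\}$ for $\lambda=2$, $\{2\ep_1,2\ep_1+4\ep_2\}$ for $\lambda=4$, and $\{2\ep_1,2\ep_1+8\}$ for $\lambda=8$; in each case the sum set $\{q(x)\}+\{q(y)\}$ modulo $16\Z_2$ exhausts $\{2,6,10,14\}$, giving all four squareclasses of $2\Z_2^{\times}$. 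The delicate part is this final coverage verification, which depends on $\lambda$ and on the relationship between $\ep_1$ and $\ep_2$ modulo $4\Z_2$.
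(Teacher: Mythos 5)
Your proposal is correct, and where it diverges from the paper it does so in a legitimate way. Part (i) is essentially the paper's own argument (the paper uses the independent set $\ep_1+\ep_2,\ \ep_1+9\ep_2,\ \ep_1+\ep_2+4\ep_3,\ \ep_1+9\ep_2+4\ep_3$, the mirror image of yours). For (ii) and (iii) the paper is slicker: it reduces $\lambda=2$ to $\lambda=8$ (resp.\ $\lambda=1$ to $\lambda=4$) by passing to a sublattice, and then gets $2\mu\repd L$ for every $\mu\in\Z_2^{\times}$ directly from Lemma~\ref{reps units} via the rescaling $\mu=a_2^2\ep_2+2a_1^2\ep_1+4a_3^2\ep_3 \implies 2\mu=(2a_1)^2\ep_1+a_2^2(2\ep_2)+a_3^2(8\ep_3)$, which avoids all further squareclass bookkeeping. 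Your direct enumeration is fine for (ii), where the perturbations $4\ep_1$ and $8\ep_3$ sit at different orders so independence is automatic; but be aware that in (iii) the phrase ``proceeds analogously'' hides a real extra step: the available perturbations $4\ep_1$, $4\ep_3$, $4\ep_4$ all have order $2$, so pairwise differences such as $4(\ep_1\pm\ep_3)$ can vanish modulo $16\Z_2$, and you must instead check that the subset sums $\delta_1\ep_1+\delta_3\ep_3+\delta_4\ep_4$ with $\delta_i\in\{0,1\}$ cover every residue modulo $4\Z_2$ (true, but it needs a short case split on the $\ep_i\modfour$). For (iv) the paper handles $\lambda=4$ by a three-way case analysis on $\lambda-\ep_1$ and reduces $\lambda=8$ to part (ii) using a unit represented by $\widehat{\A}$; your sum-set computation with $\{0,4,12\}\subseteq q(\widehat{\A})$ is a valid and arguably more uniform alternative --- the residue sets you list for $q(y)$ modulo $16\Z_2$ and the resulting coverage of all four squareclasses of $2\Z_2^{\times}$ check out in all three cases.
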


\begin{proof}
$i)$ By Proposition~\ref{uni rk 3}, it suffices to consider the case $\ep_1\equiv \ep_2\equiv \ep_3\modfour$. Under this condition, we have $\ep_i+\ep_j\in 2\Z_2^{\times}$ for all $1\leq i,j \leq 3$. Then it can be shown that $\ep_1+\ep_2, \ep_1+9\ep_2, \ep_1+\ep_2+4\ep_3, \ep_1+9\ep_2+4\ep_3$ form an independent set of four elements of $2\Z_2^{\times}$ represented by $L$.

$ii)$ It suffices to consider $L\cong \qf{\ep_1,2\ep_2,8\ep_3}$. Let $\lambda \in \Z_2^{\times}$. By Lemma~\ref{reps units}, there exist $a_1,a_2,a_3\in \Z_2$ such that $\lambda = a_2^2\ep_2+2a_1^2\ep_1+4a_3^2\ep_3$. So $2\lambda = (2a_1)^2\ep_1+a_2^2(2\ep_2)+a_3^2(8\ep_3)\in q(L)$.

$iii)$ It suffices to consider $L\cong \qf{\ep_1,2\ep_2,4\ep_3,4\ep_4}$. Let $\lambda \in \Z_2^{\times}$. By Lemma~\ref{reps units}, there exist $a_1,a_2,a_3, a_4\in \Z_2$ such that $\lambda = a_2^2\ep_2+2a_1^2\ep_1+2a_3^2\ep_3+2a_4^2\ep_4$. Then $2\lambda = (2a_1)^2\ep_1+a_2^2(2\ep_2) +a_3^2(4\ep_3)+a_4^2(4\ep_4) \in q(L)$.

$iv)$ Consider first $L\cong \widehat{\A}\perp \qf{2\ep_1,4\ep_2}$. Let $\lambda \in \Z_2^{\times}$. If $\lambda - \ep_1\in 8\Z_2$, then $2\lambda \in 2\ep_1(\Z_2^{\times})^2$ and $2\lambda \in q(L)$. If $\lambda - \ep_1 \in 2\Z_2^{\times}$, then $2\lambda - 2\ep_1 \in 4\Z_2^{\times} \subseteq q(\widehat{A})$ and $2\lambda \in q(L)$. If $\lambda - \ep_1\in 4\Z_2$, then $2\lambda-2\ep_1\in 8\Z_2$. So $2\lambda-2\ep_1-4\ep_2 \in 4\Z_2^{\times} \subseteq q(\widehat{A})$. So in all cases, $2\lambda \in q(L)$. It remains to consider $L\cong \widehat{\A}\perp \qf{2\ep_1,8\ep_2}$. Since $\Z_2 \repd \widehat{A}$, the result in this case follows from $ii)$. 
\end{proof}

Combining results from the preceding two lemmas and results for unimodular lattices from the previous subsection, and applying Lemma~\ref{split by unit} where necessary, we obtain the $\Z_2$-universal lattices in the following lemma. We note that those in $i)$ through $iv)$ can be obtained from \cite[Lemma 1]{P}; however, that lemma is stated without proof and we have chosen to include the arguments here for the sake of completeness.

\begin{lemma}\label{Z2-universal}
All $\Z_2$-lattices of the following types are $\Z_2$-universal.

i) $\qf{\ep_1,\ep_2,\ep_3,\lambda\ep_4}$, where $\lambda =1,2,4$;

ii) $\qf{\ep_1,\ep_2,2\ep_3,\lambda\ep_4}$, where $\lambda =2,4,8$;

iii) $\qf{\ep_1,2\ep_2,2\ep_3,\lambda\ep_4}$, where $\lambda =2,4$;

iv) $\qf{\ep_1,2\ep_2,4\ep_3,\lambda\ep_4}$, where $\lambda =4,8$;

v) $\widehat{\A} \perp \qf{\ep}$;

vi) $\widehat{\A}\perp \qf{2\ep_1,\lambda\ep_2}$, where $\lambda =2,4,8$.
\end{lemma}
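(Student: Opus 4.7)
The plan is to exploit the observation (noted at the start of Section 5) that an integral $\Z_2$-lattice $L$ is $\Z_2$-universal if and only if $\Z_2^{\times} \cup 2\Z_2^{\times}\subseteq q(L)$. For each item (i)--(vi), I would identify within $L$ an orthogonal sublattice that represents $\Z_2^{\times}$ and another that represents $2\Z_2^{\times}$, drawing these from Lemmas~\ref{reps units} and \ref{reps twice units} (and Example~\ref{3.2} in the cases involving $\widehat{\A}$). Since a representation by an orthogonal summand of $L$ is automatically a representation by $L$, this suffices.

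For item (i) with $\lambda=1$, Proposition~\ref{uni rk 4}(i) applies directly. For $\lambda=2$, the sublattice $\qf{\ep_2,2\ep_4,\ep_1}$ fits Lemma~\ref{reps units}(i) with $\lambda=1$ and yields $\Z_2^{\times}\repd L$, while $\qf{\ep_1,\ep_2,\ep_3}$ yields $2\Z_2^{\times}\repd L$ by Lemma~\ref{reps twice units}(i). For $\lambda=4$, Lemma~\ref{reps units}(ii) applies to $L$ itself and Lemma~\ref{reps twice units}(i) to the ternary sublattice $\qf{\ep_1,\ep_2,\ep_3}$. Items (ii), (iii) and (iv) proceed in the same spirit by matching a ternary or quaternary sublattice to one of the cases of the two preceding lemmas: Lemma~\ref{reps units}(i) or (iii) handles the units, and Lemma~\ref{reps twice units}(ii) handles the twice units in most sub-cases, with Lemma~\ref{reps twice units}(iii) applied to the full quaternary lattice covering the residual cases (specifically $\lambda=4$ in item (ii) and $\lambda=4$ in item (iv)).

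For item (v), Example~\ref{3.2} gives $q^*(\widehat{\A})=\Z_2^{\times}$, so Lemma~\ref{split by unit} implies that $\qf{\ep}\perp \widehat{\A}$ is primitively $\Z_2$-universal, hence $\Z_2$-universal. For item (vi), $\widehat{\A}$ alone represents $\Z_2^{\times}$ by Example~\ref{3.2}, and Lemma~\ref{reps twice units}(iv) yields $2\Z_2^{\times}\repd L$ directly. No substantial obstacle is anticipated: the argument is essentially a case-matching exercise against the already-established Lemmas~\ref{reps units} and \ref{reps twice units}, and the only care required is to choose the correct sublattice (or a suitable re-indexing of the orthogonal summands) for each combination of $\lambda$ values.
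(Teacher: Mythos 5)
Your proposal is correct and is essentially the paper's own proof: the paper dispatches this lemma with a single sentence stating that it follows by combining Lemmas~\ref{reps units} and \ref{reps twice units}, the unimodular results of the previous subsection, and Lemma~\ref{split by unit} where needed. Your case-matching (including the reordering tricks, e.g.\ viewing $\qf{\ep_1,\ep_2,2\ep_3,4\ep_4}$ as $\qf{\ep_1,2\ep_3,4\ep_4,1\cdot\ep_2}$ to invoke Lemma~\ref{reps twice units}(iii)) simply makes that combination explicit, and each match checks out.
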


For the proof of Proposition~\ref{u implies pu p=2}, it will also be useful to identify several lattices that fail to represent $\Z_2^{\times}$ or $2\Z_2^{\times}$.

\begin{lemma}\label{not rep units}
$\Z_2^{\times}\not \repd \qf{\ep_1,2\ep_2,2\ep_3}$, $2\Z_2^{\times} \nrepd \qf{\ep_1,2\ep_2,4\ep_3}$, and $2\Z_2^{\times} \nrepd \widehat{\A}\perp \qf{2\ep}$.
\end{lemma}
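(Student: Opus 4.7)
The plan is to handle each of the three non-representation claims by enumerating the possible residues of $q(v)$ (or $q(v)/2$) modulo a small power of $2$ and verifying that fewer than four squareclasses of $\Z_2^{\times}$ (respectively $2\Z_2^{\times}$) are attained. Throughout, I use that two units of $\Z_2$ are square-equivalent iff they are congruent mod $8$, and two elements of $2\Z_2^{\times}$ are square-equivalent iff they are congruent mod $16$.

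For $\qf{\ep_1, 2\ep_2, 2\ep_3}$, a represented unit $\mu = a_1^2\ep_1 + 2a_2^2\ep_2 + 2a_3^2\ep_3$ forces $a_1 \in \Z_2^{\times}$, so $a_1^2 \equiv 1 \pmod{8}$; and $2a_i^2\ep_i \pmod 8$ is either $0$ or $2\ep_i$ for $i=2,3$ depending on whether $a_i$ is a unit. Thus $\mu \pmod 8$ takes at most four values. A short case split on whether $\ep_2 \equiv \ep_3 \pmod 4$ or $\ep_2 \equiv -\ep_3 \pmod 4$ (the only options for units) shows that in either case two of these four candidates collapse mod $8$, so at most three squareclasses of units are represented.

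For $\qf{\ep_1, 2\ep_2, 4\ep_3}$, the order condition $\ord_2 q(v) = 1$ forces $a_1 \in 2\Z_2$ and $a_2 \in \Z_2^{\times}$, because the three summands have respectively even-type, odd-type, and even-type $2$-orders. Writing $a_1 = 2b$ and dividing by $2$ reduces $q(v)/2$ to an expression with the same structure as in the previous case (with $\ep_2$ now playing the role of the odd-order term), and the analogous case analysis on $\ep_1, \ep_3 \pmod 4$ shows that $q(v)/2 \pmod 8$ occupies at most three squareclasses of units, so $q(v)$ occupies at most three squareclasses in $2\Z_2^{\times}$.

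For $\widehat{\A}\perp \qf{2\ep}$, decompose $q(v) = \alpha + 2\ep y^2$ with $\alpha \in q(\widehat{\A})$. By Example~\ref{3.2}, $\ord_2 \alpha$ is even, while $\ord_2(2\ep y^2)$ is odd when nonzero. The parity of $2$-orders then forces $y \in \Z_2^{\times}$ and $\ord_2 \alpha \geq 2$. From $y^2 \equiv 1 \pmod 8$ and $\alpha \in 16\Z_2 \cup 4\Z_2^{\times}$ I get $q(v) \pmod{16} \in \{2\ep, 2\ep+4, 2\ep-4\}$, a three-element set; hence at most three squareclasses of $2\Z_2^{\times}$ are represented. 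The main obstacle is organizing the case analysis in the first two parts cleanly; the key observation is that any two units of $\Z_2$ are congruent mod $4$ up to sign, which is exactly what forces two of the four candidate residues mod $8$ to coincide. Part (iii) is simpler and follows directly from the parity constraint of Example~\ref{3.2}.
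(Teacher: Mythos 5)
Your proposal is correct and follows essentially the same route as the paper: the same residue count mod $8$ for $\qf{\ep_1,2\ep_2,2\ep_3}$, the same reduction of the second lattice to the first by writing $a_1=2b$ and dividing by $2$, and the same reliance on the even parity of $\ord_2$ on $q(\widehat{\A})$ for the third. The only cosmetic difference is in part (iii), where you count the three attainable residues mod $16$ while the paper exhibits the specific missed element $2\ep+8$ and derives a contradiction from an order computation; these are the same calculation packaged differently.
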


\begin{proof} First consider $L\cong \qf{\ep_1,2\ep_2,2\ep_3}$. If $\lambda \in \Z_2^{\times}\cap q(L)$, then there exist $a_1\in \Z_2^{\times}$ and $a_2,a_3 \in \Z_2$ such that $\lambda = a_1^2\ep_1+2a_2^2\ep_2+2a_3^2\ep_3$. If $a_2,a_3\in 2\Z_2$, then $\lambda \equiv \ep_1\modeight$. If $a_2\in \Z_2^{\times}$ and $a_3\in 2\Z_2$, then $\lambda \equiv \ep_1+2\ep_2\modeight$. If $a_2\in 2\Z_2$ and $a_3 \in \Z_2^{\times}$, then $\lambda \equiv \ep_1+2\ep_3\modeight$. If $a_2,a_3\in \Z_2^{\times}$, then $\lambda \equiv \ep_1+2\ep_2+2\ep_3\modeight$. When $\ep_2\equiv \ep_3\modfour$, it follows that $\ep_1+2\ep_2\equiv \ep_1+2\ep_3\modeight$. Otherwise $\ep_2\equiv -\ep_3\modfour$, and $\ep_1\equiv \ep_1+2\ep_2+2\ep_3\modfour$. So in either case, $L$ represents at most three squareclasses of units. Hence, $\Z_2^{\times} \not\repd L$.

Next consider $L\cong \qf{\ep_1,2\ep_2,4\ep_3}$. If $\lambda\in \Z_2^{\times}$ is such that $2\lambda \in q(L)$, then there exist $a_1=2b_1\in 2\Z_2$ and $a_2,a_3\in \Z_2$ such that $2\lambda = a_1^2\ep_1+2a_2^2\ep_2+4a_3^2\ep_3$. From this it follows that $\lambda = 2b_1^2\ep_2+a_2^2\ep_2+2a_3^2\ep_3$. So $2\Z_2^{\times}\repd L$ would imply that $\Z_2^{\times}\repd \qf{\ep_2,2\ep_1,2\ep_3}$, which we have just shown to be impossible.

Finally consider $L\cong \widehat{\A}\perp \qf{2\ep}$. Suppose $2\ep +8 \repd L$. Then there exist $v\in \widehat{\A}$ and $\mu \in \Z_2$ such that $2\ep +8 = q(v)+2\mu^2\ep$. It must be that $v \in 2\widehat{\A}$, since otherwise $q(v)\in \Z_2^{\times}$ and hence $q(v)+2\mu^2\ep \in \Z_2^{\times}$. Also, $\mu \in \Z_2^{\times}$, since otherwise $q(v)+2\mu^2\ep \in 4\Z_2$. So $\mu^2\equiv 1 \modeight$; that is, there exists $\xi \in \Z_2$ such that $1-\mu^2=8\xi$. Thus, \[q(v)-8 = 2\ep(1-\mu^2)=16\ep\xi.\] But $\ord_2q(v)$ is even, so that $\ord_2(q(v)-8) = 2$ or $3$, a contradiction. So $2\ep +8 \nrepd L$, and the assertion is proved. 
\end{proof}

\subsection{Primitively $\Z_2$-universal lattices of rank exceeding 4}

We are now ready to state and prove the main result of this section.

\begin{proposition}\label{u implies pu p=2}
Let $L$ be an integral $\Z_2$-lattice of rank $n\geq 5$. If $L$ is $\Z_2$-universal, then $L$ is primitively $\Z_2$-universal.
\end{proposition}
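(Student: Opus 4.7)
The strategy is to invoke Lemma~\ref{main argument}: exhibit a splitting $L \cong M \perp K$ with $K\neq 0$ and $M$ one of the $\Z_2$-universal lattices from Lemma~\ref{Z2-universal}. Working with the Jordan decomposition
\[
L \cong L_{(-1)} \perp L_{(0)} \perp L_{(1)} \perp \cdots \perp L_{(t)},
\]
the task is a case analysis identifying which Jordan components supply such an $M$.

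For the improper case $L_{(-1)} \neq 0$, the component $L_{(-1)}$ is an orthogonal sum of $\widehat{\Hy}$'s and $\widehat{\A}$'s, so the presence of any $\widehat{\Hy}$, or of two copies of $\widehat{\A}$ (which together yield $\widehat{\Hy}$ by isotropy), delivers $\widehat{\Hy}$ as a summand of $L$ and Example~\ref{3.1} finishes. Otherwise $L_{(-1)}\cong\widehat{\A}$, and one extracts a summand of the form $\widehat{\A}\perp\qf{\ep}$ or $\widehat{\A}\perp\qf{2\ep_1,\lambda\ep_2}$ with $\lambda\in\{2,4,8\}$ from Lemma~\ref{Z2-universal}(v),(vi), using Lemma~\ref{not rep units} together with $\Z_2$-universality to rule out the Jordan configurations lacking the required proper binary summand at scales $2\Z_2$, $4\Z_2$, or $8\Z_2$.

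For the proper case $L_{(-1)}=0$, the requirement $\nm L=\Z_2$ forces $L_{(0)}$ to be nonzero and proper unimodular, say $\qf{\ep_1,\ldots,\ep_{r_0}}$. If $r_0 \geq 4$, Proposition~\ref{uni rk 4}(i) gives $\Z_2$-universality of $L_{(0)}$ directly. If $r_0 = 3$, either Proposition~\ref{uni rk 3} applies immediately, or all the $\ep_i$ are mutually congruent mod $4$; in the latter case Lemma~\ref{rep of units}(ii) combined with $\Z_2$-universality of $L$ forces $\nm(L_{(1)}\perp\cdots)=2\Z_2$, which produces a proper $\qf{2\ep}$ diagonal entry at scale $2\Z_2$ and ultimately a $\Z_2$-universal sublattice of type~(ii) in Lemma~\ref{Z2-universal}. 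The subcases $r_0\in\{1,2\}$ are handled analogously, using Lemmas~\ref{rep of units}, \ref{reps units}, \ref{reps twice units}, and \ref{not rep units} (in particular, Corollary~\ref{split binary} for $r_0=2$) to pin down the Jordan profile enough to extract a summand of type~(i)--(iv).

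The principal obstacle will be certain edge configurations in which the $\Z_2$-universality of $L$ arises from a subtle interaction between $\widehat{\A}$ (or a small proper unimodular component) and an improper higher-scale block such as $4\Hy$ (for instance $L\cong\widehat{\A}\perp\qf{2\ep_1}\perp 4\Hy$ in rank $5$), where no rank-$(n-1)$ $\Z_2$-universal orthogonal summand drawn from Lemma~\ref{Z2-universal} is available. In such cases one must supplement the Lemma~\ref{main argument} approach by exploiting the primitive isotropic vectors of the improper block to certify the primitivity of representations, producing the missing squareclasses directly by adjusting the contribution from the anisotropic component. Verifying that every exotic Jordan profile permitted by $\Z_2$-universality falls under one of these arguments is the most delicate technical point.
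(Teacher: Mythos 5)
Your outline follows the paper's proof essentially step for step: Jordan-split $L$, apply Lemma~\ref{main argument} with the inventory of $\Z_2$-universal sublattices from Lemma~\ref{Z2-universal} (pinning down the norm/scale profile of the residual components via Lemma~\ref{rep of units}, Lemma~\ref{not rep units} and Corollary~\ref{split binary}), and treat the leftover improper higher-scale blocks $\calP\cong 2^k\A$ or $2^k\Hy$ by direct primitive-representation arguments exactly as you anticipate in your final paragraph. The one caveat is that what you defer as ``the most delicate technical point'' is in fact the bulk of the paper's proof --- the exhaustive case analysis on $(r_{-1},r_0,r_1,r_2,r_3)$ --- and one intermediate claim is slightly off ($\Z_2$-universality in the $r_0=3$ case forces only $\nm K\supseteq 4\Z_2$, not a proper $\qf{2\ep}$ entry, which is precisely why the improper $\calP$ cases survive), though every surviving configuration does fall to one of the two mechanisms you describe.
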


\begin{proof} Since $L$ is $\Z_2$-universal, it must be that $\nm L=\Z_2$ and so $\sca L=\Z_2$ or $\frac{1}{2} \Z_2$. We first consider the case when $\sca L= \frac{1}{2} \Z_2$. So $L_{(-1)}\neq 0$ and $L\cong L_{(-1)}\perp K$, where $\sca K \subseteq \Z_2$. If $r_{-1}>2$, or $r_{-1}=2$ and $L_{(-1)}$ is isotropic, then $L$ is split by $\widehat{\Hy}$ and it follows that $L$ is primitively $\Z_2$-universal. So we need only consider further those lattices $L$ for which there is a splitting of the type $L\cong \widehat{\A}\perp K$, where $\sca K \subseteq \Z_2$. If $\nm K\subseteq 4\Z_2$, then $\widehat{\A}\perp K$ cannot represent any element of $2\Z_2^{\times}$ and is thus not $\Z_2$-universal. So $2\Z_2\subseteq \nm K$, and it follows that $\sca K = \Z_2$ or $2\Z_2$. If $\sca K=\Z_2$, then $L$ primitively $\Z_2$-universal follows from Example 3.5, Lemma \ref{Z2-universal}$(v)$ and Lemma~\ref{main argument}. So we are left to further consider only those lattices for which $\sca K= \nm K = 2\Z_2$. So \[L\cong \widehat{\A}\perp K\cong \widehat{\A}\perp \langle 2\ep\rangle \perp K', \text{ with } \sca K'\subseteq 2\Z_2.\] By Lemma \ref{not rep units}, the sublattice $\widehat{\A}\perp \qf{ 2\ep}$ does not represent all elements of $2\Z_2^{\times}$. Since $L$ is $\Z_2$-universal, it follows from Lemma~\ref{rep of units} that $\nm K'\supseteq 8\Z_2$. Hence, \[8\Z_2\subseteq \nm K' \subseteq \sca K' \subseteq 2\Z_2.\] If $\nm K'=\sca K'=2^t\Z_2$ for $t=1,2,3$, then $L$ is split by a sublattice $\widehat{\A}\perp \qf{2\ep_1,2^t\ep_2}$. All such lattices are $\Z_2$-universal by Lemma \ref{Z2-universal}$(v)$, and it follows from Lemma~\ref{main argument} that $L$ is primitively $\Z_2$-universal, since $n\geq 5$. Finally, consider the case when $\nm K'=8\Z_2=2\sca K'$. Then $L$ is split by $\widehat{\A}\perp \qf{2\ep}\perp \calP$, where $\calP\cong \left(\begin{smallmatrix} 8 & 4\\ 4 & 8\end{smallmatrix}\right)$ or $\left(\begin{smallmatrix} 0 & 4\\ 4 & 0 \end{smallmatrix}\right)$. Note first that $8\Z_2^{\times}\prepd \calP$. If $\lambda \in 4\Z_2^{\times}$, then for any $v\prin \calP$, $\lambda-q(v)\in 4\Z_2^{\times}\repd \widehat{\A}$; hence, $\lambda \prepd \widehat{\A}\perp \calP$. If $\lambda \in 16\Z_2$, then $\lambda-8\ep \in 8\Z_2^{\times}\prepd\calP$; hence, $\lambda \prepd \qf{2\ep}\perp\calP$. This completes the argument in the case that $\sca L= \frac{1}{2} \Z_2$.

\smallskip
Now we consider the case when $\sca L=\Z_2$. Then $r_{-1}=0$ and $r_0>0$. We break down the argument according to the size of $r_0$.

\smallskip
$r_0\geq 4$: $L$ is split by a proper unimodular sublattice of rank 4, which is $\Z_2$-universal by Proposition~\ref{uni rk 4}. Since $n\geq 5$, it follows from Lemma~\ref{main argument} that $L$ is primitively $\Z_2$-universal.

\smallskip
$r_0=3$: $L\cong \qf{\ep_1,\ep_2,\ep_3}\perp K$, where $\sca K \subseteq 2\Z_2$. Since $2\Z_2^{\times}\repd \qf{\ep_1,\ep_2,\ep_3}$ by Lemma \ref{reps twice units}, we need to consider only the case when $\Z_2^{\times} \nrepd \qf{\ep_1,\ep_2,\ep_3}$, since otherwise $\qf{\ep_1,\ep_2,\ep_3}$ is $\Z_2$-universal and $L$ is primitively $\Z_2$-universal by Lemma \ref{main argument}. Since $L$ is $\Z_2$-universal, we must then have $\nm K \supseteq 4\Z_2$, by Lemma~\ref{rep of units}. So we have \[4\Z_2 \subseteq \nm K \subseteq \sca K \subseteq 2\Z_2.\] If $\nm K = \sca K = 2^t\Z_2$, for $t=1,2$, then $L$ is split by $\qf{\ep_1,\ep_2,\ep_3,2^t\ep_4}$ which is $\Z_2$-universal. Otherwise, $\nm K = 4\Z_2 = 2\sca K$, in which case $L$ is split by $\qf{\ep_1,\ep_2,\ep_3} \perp \calP$, where $\calP\cong \left(\begin{smallmatrix} 4 & 2\\ 2 & 4\end{smallmatrix}\right)$ or $\left(\begin{smallmatrix} 0 & 2\\ 2 & 0\end{smallmatrix}\right)$. Then $4\Z_2^{\times} \prepd \calP$. If $\lambda \in 8\Z_2$, then $\lambda-4\ep_1 \in 4\Z_2^{\times}$ and so $\lambda \prepd \qf{\ep_1}\perp \calP$. That completes this subcase.

\smallskip
$r_0=2$: $L\cong \qf{\ep_1,\ep_2}\perp K$, with $\sca K \subseteq 2\Z_2$. Since $L$ is $\Z_2$-universal, we have $\nm K = \sca K =2\Z_2$ by Corollary~\ref{split binary}. So $r_1>0$ and $L_{(1)}$ is proper. So if $r_1\geq 2$, $L$ is split by a sublattice $\qf{\ep_1,\ep_2,2\ep_3,2\ep_4}$, which is $\Z_2$-universal by Lemma~\ref{Z2-universal}$(ii)$ and the conclusion follows. So we further consider the case $r_1=1$; that is, \[L\cong \qf{\ep_1,\ep_2,2\ep_3}\perp K',\,\text{ with }\sca K' \subseteq 4\Z_2.\] We further assume that $2\Z_2^{\times}\not\rightarrow \qf{\ep_1,\ep_2,2\ep_3}$, since otherwise $\qf{\ep_1,\ep_2,2\ep_3}$ is $\Z_2$-universal and there is nothing to prove. So, since $L$ is $\Z_2$-universal, we must have $\nm K' \not\subseteq 16 \Z_2$. So \[8\Z_2 \subseteq \nm K' \subseteq \sca K' \subseteq 4\Z_2.\] If $\nm K'=\sca K' = 2^t\Z_2$, with $t=2,3$, then $L$ is split by a $\Z_2$-universal lattice of the type $\qf{\ep_1,\ep_2,2\ep_3,2^t\ep_4}$. Otherwise, $\nm K'=8\Z_2 = 2\sca K'$ and $L$ is split by a lattice of the type $\qf{\ep_1,\ep_2,2\ep_3}\perp \calP$, with $\calP \cong \left(\begin{smallmatrix} 8 & 4\\ 4 & 8\end{smallmatrix}\right)$ or $\left(\begin{smallmatrix} 0 & 4\\ 4 & 0 \end{smallmatrix}\right)$. So $8\Z_2^{\times}\prepd \calP$; let $v\prin \calP$ such that $q(v)=8$. If $\lambda \in 4\Z_2^{\times}$, then $\lambda-q(v) \in 4\Z_2^{\times} \repd \qf{\ep_1,\ep_2,2\ep_3}$ (since $\Z_2^{\times}\repd \qf{\ep_1,\ep_2,2\ep_3}$), and $\lambda \prepd L$. Finally if $\lambda \in 16 \Z_2$, then $\lambda - 2^2\cdot 2\ep_3 \in 8\Z_2^{\times}$ and so $\lambda - 2^2\cdot 2\ep_3 \prepd \calP$ and $\lambda\prepd \qf{2\ep_3}\perp \calP$.

\smallskip
$r_0=1$: $L\cong \qf{\ep_1}\perp K$, with $\sca K \subseteq 2\Z_2$. Since $L$ is $\Z_2$-universal, $\nm K = \sca K = 2\Z_2$ (since otherwise $q(L)\cap 2\Z_2^{\times}=\emptyset$). So $r_1>0$ and $L_{(1)}$ is proper. We consider the various possibilities for $r_1$. If $r_1\geq 3$, then $L$ is split by $\qf{\ep_1,2\ep_2,2\ep_3,2\ep_4}$ which is  $\Z_2$-universal. If $r_1=2$, then $L\cong \qf{\ep_1,2\ep_2,2\ep_3}\perp K'$, with $\sca K'\subseteq 4\Z_2$. Since $L$ is $\Z_2$-universal and $\Z_2^{\times}\nrepd \qf{\ep_1,2\ep_2,2\ep_3}$ by Lemma \ref{not rep units}, it follows from Lemma \ref{rep of units} that $\nm K'=\sca K' =4\Z_2$. So $r_2>0$ and $L_{(2)}$ is proper, so $L$ is split by $\qf{\ep_1,2\ep_2,2\ep_3,4\ep_4}$, which is  $\Z_2$-universal. The only remaining case is when $r_1=1$. Then $L\cong \qf{\ep_1,2\ep_2}\perp K$, with $\sca K \subseteq 4\Z_2$. Since $L$ is universal and $\Z_2^{\times}\nrepd \qf{\ep_1,2\ep_2}$ by Lemma~\ref{not rep units}, it follows from Lemma~\ref{rep of units} that $\nm K = \sca K = 4\Z_2$. So $r_2>0$ and $L_{(2)}$ is proper. If $r_2\geq 2$, then $L$ is split by $\qf{\ep_1,2\ep_2,4\ep_3,4\ep_4}$, which is $\Z_2$-universal. Finally, it remains to consider the subcase when $r_2=1$. Then $L\cong \qf{\ep_1,2\ep_2,4\ep_3} \perp K''$, with $\sca K''\subseteq 8\Z_2$. Since $L$ is universal and $2\Z_2^{\times}\nrepd \qf{\ep_1,2\ep_2,4\ep_3}$ by Lemma~\ref{not rep units}, it follows that Lemma \ref{rep of units} that $\nm K'=\sca K' =8\Z_2$. So $r_3>0$ and $L_{(3)}$ is proper, and $L$ is split by $\qf{\ep_1,2\ep_2,4\ep_3,8\ep_4}$, which is $\Z_2$-universal. 
\end{proof}

\section{Proofs of theorems}

We conclude the paper by supplying proofs of the theorems stated in the Introduction. As the theorems are stated there in the traditional language of quadratic forms, we will first review the connections between quadratic forms and quadratic lattices, as used in Sections 2 through 5. A nondegenerate integral quadratic form $f=f(X_1,\ldots,X_n)$ of rank $n$ can be written as \begin{equation}\label{form}f=\sum_{1\leq i,j \leq n}a_{ij}X_iX_j, \text{ where } a_{ij}=a_{ji}, a_{ii}\in \Z, 2a_{ij}\in \Z \text{ for } i\neq j.\end{equation}  Let $M_f$ denote the symmetric matrix $(a_{ij})$, and associate to $f$ a quadratic $\Z$-lattice $L$ and basis $\mathcal B$ for $L$ such that the Gram matrix of $L$ with respect to $\mathcal B$ is $M_f$.  An integer is (primitively) represented by the form $f$ if and only if it is (primitively) represented by the associated lattice $L$. Our main tool for proving Theorems 1.1 through 1.3 is the following result that relates the positive integers primitively represented by a positive definite quadratic $\Z$-lattice to those integers that are primitively represented by all local completions $L_p$ of the lattice: {\em Let $L$ be a positive definite integral $\Z$-lattice of rank $n\geq 4$. Then there is an integer $N$ with the following property: If $a\geq N$ is an integer that is primitively represented by $L_p$ for all primes $p$, then $a$ is primitively represented by $L$.} (see, e.g., \cite[Theorem 1.6, page 204]{Ca}). In particular, such a lattice $L$ is almost primitively universal if and only if $L_p$ is primitively $\Z_p$-universal for all primes $p$. From this, the proofs of Theorems~\ref{Theorem 1} and \ref{Theorem 2} are now immediate.
\bigskip

\noindent {\it Proof of Theorem~\ref{Theorem 1}}\,
Follows from Corollary 3.10. \qed
\bigskip

\noindent {\it Proof of Theorem~\ref{Theorem 2}}\,
Follows from Propositions 4.2 and 5.12. \qed
\bigskip

The form (\ref{form}) is said to be {\em classically integral} if $a_{ij}\in \Z$ for all $i,j$. In this case, the discriminant $df=\det\,L$ is an integer, and \[\ord_p\vol L_p=\ord_p\vol L=\ord_pdL=\ord_pdf\] for all primes $p$. In particular, for a classically integral form $f$ and positive integer $t$, \[p^t \mid df \text{   if and only if   } \vol L_p \subseteq p^t\Z_p.\]
\smallskip

\noindent {\it Proof of Theorem~\ref{Theorem 3}}\, Let $L$ be a positive definite $\Z$-lattice for which $\sca L \subseteq \Z$, $\rk{L} = n\geq 4$ and, for all primes $p$, $p^{n-2}\nmid dL$. Moreover, it is assumed that $L$ represents an odd integer, and that $dL$ is even when $n=4$. If $\sca L_p \subseteq p\Z_p$ for some prime $p$, then $\vol L_p \subseteq p^n\Z_p$ and it would follow that $p^n\mid dL$, contrary to assumption. Hence, $\sca L = \Z$. When $p=2$, the assumption that $L$ represents an odd integer guarantees that $\nm L=\Z$ as well. So for each prime $p$, $L_p$ has a splitting of the type $L_p \cong L_{(0)}\perp K$, where $L_{(0)}$ is diagonalizable and $\sca K\subseteq p\Z_p$ or $K=0$. Since $p^{n-2}\nmid dL$, it follows that $\rk{K}\leq n-3$ and so $r_0 = \rk{L_{(0)}}\geq 3$. So, if $p$ is odd, $L_p$ is primitively $\Z_p$-universal by Lemma 4.1$(ii)$. So we need only consider further the case $p=2$. If $r_0\geq 4$, then $L_2$ is split by $N\cong\qf{\ep_1,\ep_2,\ep_3,\ep_4}$, which is $\Z_2$-universal by Lemma~\ref{Z2-universal}. The assumption that $dL$ is even when $n=4$ rules out the possibility that $L=N$; so $n\geq 5$ and it follows from Theorem~\ref{Theorem 2} that $L_2$ is primitively $\Z_2$-universal. So to complete the proof, we consider the case $r_0=3$. So \[L_2\cong \qf{\ep_1,\ep_2,\ep_3} \perp K, \text{ with }\sca K\subseteq 2\Z_2 \text{ and } \rk{K}=n-3.\] If $\sca K \subseteq 4\Z_2$, then $\vol L_2=\vol K \subseteq 2^{2(n-3)}\Z_2$; but $2(n-3)\geq n-2$ since $n\geq 4$, thus contradicting the assumption that $p^{n-2}\nmid dL$. So $\sca L_2=2\Z_2$. This leaves two possibilities: $\nm K=2\Z_2$ or $\nm K=4\Z_2$. First consider the case $\nm K=4\Z_2$. Then $\rk{K}\geq 2$ (since $\nm K \neq \sca K$) and $n\geq 5$. Since $\nm K=4\Z_2$, there exists $\ep_4 \in \Z_2^{\times}$ such that $4\ep_4 \repd K$. So $L_2$ contains a sublattice $\qf{\ep_1,\ep_2,\ep_3,4\ep_4}$, which is $\Z_2$-universal by Lemma~\ref{Z2-universal}. So $L_2$ is $\Z_2$-universal and hence primitively $\Z_2$-universal by Theorem~\ref{Theorem 2}. Finally, consider the case $\nm K=\sca K=2\Z_2$. Then there exists $\ep_4 \in \Z_2^{\times}$ such that $2\ep_4 \repd K$. So $L_2$ contains a sublattice $N\cong \qf{\ep_1,\ep_2,\ep_3,2\ep_4}$, which is $\Z_2$-universal by Lemma~\ref{Z2-universal}. If $n\geq 5$, it follows from Theorem~\ref{Theorem 2} that $L_2$ is primitively $\Z_2$-universal. If $n=4$, then $L_2=N$. Since $\Z_2^{\times} \repd \qf{\ep_2,\ep_3,2\ep_4}$ by Lemma~\ref{reps units}, it follows from Lemma~\ref{split by unit} that $L_2$ is primitively $\Z_2$-universal. This completes the proof. \qed
\bigskip

The proof of Theorem~\ref{Theorem 4} relies on several fundamental results from spinor genus theory. For general background on the spinor genus, the reader is referred, e.g., to \cite[\S102A]{OM} or \cite[Chapter 11]{Ca}. For a $\Z$-lattice $L$, the genus, spinor genus and isometry class of $L$ will be denoted by $\text{gen}\,L$, $\text{spn}\,L$ and $\text{cls}\,L$, respectively. If $\mathcal S$ denotes one of the objects $\text{gen}\,L$, $\text{spn}\,L$ or $\text{cls}\,L$, the notations $a\repd \mathcal S$ or $a\prepd \mathcal S$ will mean that there exists a lattice $K\in\mathcal S$ such that $a\repd K$ or $a\prepd K$, respectively.
\bigskip

\noindent {\it Proof of Theorem~\ref{Theorem 4}}\, Let $L$ be an indefinite integral $\Z$-lattice of rank $n\geq 5$ such that every integer is represented by $\text{gen}\,L$. Then $L_p$ is $\Z_p$-universal for all primes $p$. So, by Propositions~\ref{non dyadic} and \ref{u implies pu p=2}, $L_p$ is primitively $\Z_p$-universal. Let $0\neq a \in \Z$. Then $a\prepd L_p$ for all $p$, and it follows as in \cite[Example 102:5]{OM} that $a\prepd \text{gen}\,L$. Since $n\geq 4$, it then follows from \cite[Theorem 7.1, page 227]{Ca} that $a\prepd \text{spn}\,L$. Since $L$ is indefinite and $n\geq 3$, $\text{spn}\,L = \text{cls}\,L$ by \cite[Theorem 104:5]{OM}. Hence $a\prepd \text{cls}\,L$ and we conclude that $a\prepd L$, as desired.\qed

\end{document}